\numberwithin{equation}{section}
\newcommand{\R}{\mathbb{R}} 
\newcommand{\C}{\mathbb{C}} 
\newcommand{\E}{\mathbb{E}} 
\newcommand{\bigO}{\mathrm{O}} 
\newcommand{\littleo}{\mathrm{o}} 
\newcommand{\N}{\mathbb{N}} 
\newcommand{\M}{\mathcal{M}} 
\newcommand{\F}{\mathcal{F}} 
\newcommand{\Leb}{\mathrm{L}} 
\newcommand{\Log}{\mathop{\rm {Log}}\nolimits}
\newcommand{\supp}{\mathop{\rm {supp}}\nolimits}
\renewcommand{\rm}[1]{\mathrm{#1}}
\renewcommand{\tilde}[1]{\widetilde{#1}}
\renewcommand{\epsilon}{\varepsilon}
\renewcommand{\geq}{\geqslant}
\renewcommand{\leq}{\leqslant}
\renewcommand{\Re}{\mathop{\rm {Re}}\nolimits}
\newcommand{\ap}{\mathscr{A}}
\newcommand{\apm}{\ap_\mathscr{D}}
\theoremstyle{plain}
\newtheorem{proposition}{Proposition}[section]
\newtheorem{lemma}{Lemma}[section]
\newtheorem{corollary}{Corollary}[section]
\newtheorem{theorem}{Theorem}[section]
\theoremstyle{definition}
\theoremstyle{remark}
\setlist[enumerate,1]{label=(\arabic*),ref=(\arabic*)}
\setlist[enumerate,2]{label=(\alph*),ref=(\arabic{enumi})(\alph*)}
\setlist[enumerate,3]{label=(\roman*),ref=(\arabic{enumi})(\alph{enumii})(\roman*)}
\setlist[enumerate,4]{label=(\Alph*),ref=(\arabic{enumi}-\alph{enumii}-\roman{enumiii}-\Alph*)}
\begin{document}

\title{Non-classical Tauberian and Abelian type criteria for the moment problem}

\author{P.~Patie}\thanks{The authors are grateful to an anonymous referee for her/his constructive comments and in particular for pointing out an issue in an earlier version of the proof of Corollary 1.2(1).  This work was partially supported by  NSF Grant DMS-1406599, a CNRS grant and the ARC IAPAS, a fund of the Communaut\'ee fran\c{c}aise de Belgique. Both authors are grateful  for the hospitality of  the  Laboratoire de Math\'ematiques et de leurs applications de Pau, where  this work was initiated.}
\address{School of Operations Research and Information Engineering, Cornell University, Ithaca, NY 14853.}
\email{pp396@cornell.edu}

\author{A.~Vaidyanathan}
\address{Center for Applied Mathematics, Cornell University, Ithaca, NY, 14853.}
\email{av395@cornell.edu}

\subjclass[2010]{Primary 44A60, 60B15; Secondary 60G51}
\keywords{Stieltjes moment problem, Carleman and Krein criteria, Hardy's condition, asymptotically parabolic functions}
\date{ \today }


\begin{abstract}
The aim of this paper is to provide some new criteria for the determinacy problem of the Stieltjes moment problem. We first give a Tauberian type criterion for moment indeterminacy that is expressed purely in terms of the asymptotic behavior of the moment sequence (and its extension to imaginary lines). Under an additional assumption this provides a converse to the classical Carleman's criterion, thus yielding an equivalent condition for moment determinacy. We also provide a criterion for moment determinacy that only involves the large asymptotic  behavior of the distribution (or of the density if it exists), which can be thought of as an Abelian counterpart to the previous Tauberian type result. This latter criterion generalizes Hardy's condition for determinacy, and under some further assumptions yields a converse to the Pedersen's refinement of the celebrated Krein's theorem. The proofs utilize non-classical Tauberian results for moment sequences that are analogues to the ones developed in \cite{feigin:1983} and \cite{balkema:1995} for the bi-lateral Laplace transforms in the context of asymptotically parabolic functions. We illustrate our results by studying the time-dependent moment problem for the law of log-L\'evy processes viewed as a generalization of the log-normal distribution. Along the way, we derive the large asymptotic behavior of the density of spectrally-negative L\'evy processes having a Gaussian component, which may be of independent interest.
\end{abstract}
\maketitle

\section{Introduction and Main Results} \label{sec:main-results}

The determinacy problem for the Stieltjes moment problem asks under what conditions a positive measure $\nu$ supported on $[0,\infty)$ can be uniquely determined by its sequence of moments $\M_\nu=\left(\M_\nu(n)\right)_{n \geq 0}$ where, for any $n\geq 0$,
\begin{equation*}
\M_\nu(n) = \int_0^\infty x^n \nu(dx)<\infty.
\end{equation*}
When a positive measure is uniquely determined by its moments we say it is moment determinate, otherwise it is moment indeterminate. Note that we consider only measures with unbounded support since otherwise the problem is trivial. For references on the moment problem see the classic monographs \cite{akhiezer:1965} and \cite{shohat:1943}, the comprehensive exposition \cite{simon:1998}, and the more recent monograph \cite{schmudgen:2017}, where the interested reader will find a nice description of its connections and interplay with many branches of mathematics, as well as its broad range of applications.

\subsection{A Tauberian type moment condition for indeterminacy, and a converse for Carleman's criterion}


One of the most widely used criteria for determinacy is Carleman's criterion, which states that if
\begin{equation}\label{eq:def_ms}
\sum^\infty \M_\nu^{-\frac{1}{2n}}(n) = \infty
\end{equation}
then $\nu$ is moment determinate, where for a sequence $(a_n)_{n\geq 0}$ of real numbers  $\sum^\infty a_n= \infty$ denotes $\sum_{n_0}^\infty a_n = \infty$ for some index $n_0 \geq 1$ whose choice does not impact the divergence property (the same notation holds for integrals of functions). However, it is well-known that the divergence of this series is not a necessary condition for moment determinacy, see e.g.~Heyde \cite{heyde:1963} for an example. The main result in this section is a condition for indeterminacy that is entirely expressed in terms of the moment transform (and its extension to imaginary lines) of the measure, which under an additional assumption yields a converse to Carleman's criterion.  In order to state this criterion we need to introduce some notation.

Let $\mathrm{C}_+^2(I)$ denote the set of twice differentiable functions on an interval $I \subseteq \R$ whose second derivative is continuous and strictly positive on $I$. We define the set of \emph{asymptotically parabolic functions}, a notion which traces its origins to \cite{balkema:1993,balkema:1995}, as
\begin{equation}
\label{eq:definition-ap}
\ap = \left\lbrace G \in \mathrm{C}_+^2((a,\infty)), \ a \geq -\infty; \ G''\left(u+ w(G''(u))^{-\frac{1}{2}}\right) \stackrel{\infty}{\sim} G''(u), \text{ locally uniformly in } w \in \R\right\rbrace,
\end{equation}
where $f(u) \stackrel{\infty}{\sim} g(u)$ means that $\lim\limits_{u \to \infty} \frac{f(u)}{g(u)} = 1$. We are now ready to state our Tauberian type criterion for the Stieltjes moment problem.

\begin{theorem}
\label{thm:(in)determinacy-moment-sequence}
Let $\M_\nu$ be the Stieltjes moment sequence of a positive measure $\nu$ and assume that the following two conditions hold.
\begin{enumerate}[label=(\alph*)]
\item \label{item-a:thm:(in)determinacy-moment-sequence} There exists $G \in \ap$ such that
\begin{equation}
\M_\nu(n) \stackrel{\infty}{\sim} e^{G(n)}.
\end{equation}
\item \label{item-b:thm:(in)determinacy-moment-sequence} There exists $n_0 \in [0,\infty)$ such that for $n \geq n_0$, writing $\eta^2(n) = \left( \log \M_\nu(n)\right)''$, the functions
\begin{equation}
\label{eq:analytic condition on Mellin transform}
y \mapsto \left|\frac{\M_\nu\left(n+i\tfrac{y}{\eta(n)}\right)}{\M_\nu(n)}\right|
\end{equation}
are uniformly (in $n$) dominated by a function in $\Leb^1(\R)$.
\end{enumerate}
\begin{enumerate}
\item \label{item-1:thm:(in)determinacy-moment-sequence} Then, the condition
\begin{equation}
\label{eq:G* Krein integral}
\int^\infty (uG'(u)-G(u)) G''(u) e^{-\frac{G'(u)}{2}}  du < \infty \implies \nu \text{ is moment indeterminate}
\end{equation}
where, here and below, $\int^\infty$  stands for the  integral $\int_A^\infty$ for some $A>0$.
\item \label{item-2:thm:(in)determinacy-moment-sequence} If in addition $\lim\limits_{u \to \infty} ue^{-\frac{G'(u)}{2}} < \infty$ then
\begin{align}
\nu \text{ is moment determinate }
& \iff \int^\infty (uG'(u)-G(u)) G''(u) e^{-\frac{G'(u)}{2}} du = \infty  \\
& \iff \int^\infty e^{-\frac{G'(u)}{2}} du = \infty \\
\label{eq:Carleman's criterion}
& \iff \sum^\infty \M_\nu^{-\frac{1}{2n}}(n) = \infty.
\end{align}
\end{enumerate}
\end{theorem}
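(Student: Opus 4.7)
I would reduce the moment integral condition \eqref{eq:G* Krein integral} to a classical Krein--Pedersen-type integral condition on the tail of the density of $\nu$, using as a bridge a Tauberian step (the moment-sequence analogue of the Feigin--Balkema theorems announced in the abstract) that, out of (a) and (b), produces a precise logarithmic tail asymptotic for the density. Once that asymptotic is in hand, part \ref{item-1:thm:(in)determinacy-moment-sequence} will follow from a change of variables identifying \eqref{eq:G* Krein integral} with Pedersen's Krein-type integral, while part \ref{item-2:thm:(in)determinacy-moment-sequence} will follow from integration-by-parts manipulations of the exponent-type integrals together with the classical Carleman criterion.

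\textbf{Part \ref{item-1:thm:(in)determinacy-moment-sequence}.} Assumption \ref{item-b:thm:(in)determinacy-moment-sequence} legitimizes a Mellin inversion
\[ f_\nu(x) \;=\; \frac{1}{2\pi i} \int_{c-i\infty}^{c+i\infty} \M_\nu(s)\, x^{-s-1}\, ds, \]
and a saddle point expansion centered at $u(x) := (G')^{-1}(\log x)$ --- whose natural Gaussian scale is precisely $\eta(n)^{-1} = G''(n)^{-1/2}$ by the $\ap$-property of $G$, and whose amplitude is fixed by \ref{item-a:thm:(in)determinacy-moment-sequence} --- yields that $\nu$ has a density $f_\nu$ on a right neighborhood of $\infty$ with
\[ -\log f_\nu(x) \stackrel{\infty}{\sim} G^*(\log x), \qquad G^*(s) := s\cdot u(s) - G(u(s)). \]
The strict convexity of $G^*$ (dual to that of $G$) provides the regularity needed to invoke Pedersen's refinement of Krein's theorem. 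Substituting $s = G'(u)$, $ds = G''(u)\,du$ (using $uG'(u)-G(u) = G^*(G'(u))$), and then $x = e^{s/2}$, gives
\[ \int^\infty (uG'(u)-G(u)) G''(u)\, e^{-G'(u)/2}\,du \;=\; \int^\infty G^*(s)\, e^{-s/2}\,ds \;\asymp\; \int^\infty \frac{-\log f_\nu(x^2)}{1 + x^2}\,dx, \]
so the hypothesis of \ref{item-1:thm:(in)determinacy-moment-sequence} forces Pedersen's Krein integral on the right to be finite and Pedersen's theorem concludes that $\nu$ is moment indeterminate.

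\textbf{Part \ref{item-2:thm:(in)determinacy-moment-sequence}.} The implication $\nu$ determinate $\Rightarrow \int^\infty (uG'-G) G''\, e^{-G'/2}\,du = \infty$ is the contrapositive of part \ref{item-1:thm:(in)determinacy-moment-sequence}. Integration by parts, using $\frac{d}{du}(uG'-G) = u G''$ and $\frac{d}{du} e^{-G'/2} = -\tfrac12 G''\, e^{-G'/2}$, yields
\[ \int_{u_0}^\infty e^{-G'(u)/2}\, du \;=\; \bigl[u\, e^{-G'(u)/2}\bigr]_{u_0}^\infty + \tfrac12 \int_{u_0}^\infty u G''(u)\, e^{-G'(u)/2}\, du \]
and
\[ \int_{u_0}^\infty (uG'-G) G''\, e^{-G'/2}\, du \;=\; -2 \bigl[(uG'-G)\, e^{-G'/2}\bigr]_{u_0}^\infty + 2 \int_{u_0}^\infty u G''(u)\, e^{-G'(u)/2}\, du. \]
Under $\lim_{u\to\infty} u\, e^{-G'(u)/2} < \infty$ together with the $\ap$-structure, the two boundary terms at infinity are finite, so the first two conditions in the chain are equivalent. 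For the Carleman series, \ref{item-a:thm:(in)determinacy-moment-sequence} gives $\M_\nu^{-1/(2n)}(n) \stackrel{\infty}{\sim} e^{-G(n)/(2n)}$; asymptotic parabolicity allows comparison of $G(n)/n$ with $G'(n)$ up to bounded factors and monotonicity of $G'$ converts series into integrals, yielding $\sum^\infty \M_\nu^{-1/(2n)}(n) = \infty \Leftrightarrow \int^\infty e^{-G'(u)/2}\,du = \infty$. Classical Carleman $\sum^\infty \M_\nu^{-1/(2n)}(n) = \infty \Rightarrow \nu$ determinate closes the chain of equivalences.

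\textbf{Main obstacle.} The hard and genuinely novel step is the Tauberian density estimate $-\log f_\nu(x) \stackrel{\infty}{\sim} G^*(\log x)$ in part \ref{item-1:thm:(in)determinacy-moment-sequence}: I would need to justify a uniform saddle point evaluation of the Mellin inversion integral under only the mild $L^1$-domination \eqref{eq:analytic condition on Mellin transform}, precisely on the scale $\eta(n)^{-1}$ prescribed by $G \in \ap$, and extract the regularity needed to invoke Pedersen's sharpening of Krein's theorem. The remaining pieces --- the two changes of variables, the boundary-term checks, and the series-to-integral comparison --- are conceptually routine, but they all hinge on the fact that the $\ap$-structure propagates cleanly through Legendre duality and the various integrations by parts.
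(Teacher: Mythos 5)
Your architecture matches the paper's: a Tauberian step turning (a)+(b) into a density asymptotic of the form $e^{-G_*(\log x)}$, then Pedersen's refinement of Krein's theorem plus the change of variables $y=G'(u)$ for part (1), and integration by parts plus convexity plus classical Carleman for part (2); your two integration-by-parts identities are exactly the ones the paper uses, and your boundary-term check via $\lim_{u\to\infty}ue^{-G'(u)/2}<\infty$ is the paper's (it adds one application of L'H\^opital to handle $(uG'-G)e^{-G'/2}$). The Tauberian step you defer is the paper's Proposition 3.1: rather than a raw saddle-point evaluation of the Mellin inversion, the paper pushes $\nu$ forward under $\log$, shows the resulting distribution has a very thin tail, forms the normalized Esscher transforms, uses (b) with dominated convergence to get $\Leb^1$-convergence of their Fourier transforms to the Gaussian, and then invokes Theorems 5.1 and 4.4 of Balkema--Kl\"uppelberg--Stadtm\"uller to obtain $\nu(x)\stackrel{\infty}{\sim}\frac{1}{\sqrt{2\pi}}e^{-G_*(\log x)}/(x\,s_{G_*}(\log x))$; it also needs a separate lemma that $G_*\in\apm$ and that logarithms of flat prefactors are $\littleo(G_*)$, so that the $x^{-1}$ and $s_{G_*}$ factors can be absorbed when verifying Pedersen's integral. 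So your ``main obstacle'' is real and is where essentially all the work lives; as written, part (1) is a correct reduction but not a proof.

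The one step that would fail as stated is the claimed direct equivalence $\sum^\infty\M_\nu^{-\frac{1}{2n}}(n)=\infty\iff\int^\infty e^{-G'(u)/2}du=\infty$, justified by ``comparison of $G(n)/n$ with $G'(n)$ up to bounded factors.'' Convexity gives $G(n)/n\leq G'(n)+\bigO(1/n)$, hence $e^{-G(n)/2n}\geq c\,e^{-G'(n)/2}$ and the implication $\int^\infty e^{-G'/2}du=\infty\implies\sum^\infty\M_\nu^{-\frac{1}{2n}}(n)=\infty$; but the difference $G'(n)-G(n)/n=G_*(G'(n))/n$ is unbounded in general (for $G(u)=u^2/2$, the log-normal case, it equals $n/2$), so there is no two-sided pointwise comparison. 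The paper never proves the reverse implication by comparison: it closes the loop through the moment problem itself --- if $\int^\infty e^{-G'/2}du<\infty$ then the Krein-type integral is finite, part (1) gives indeterminacy, and the contrapositive of classical Carleman forces $\sum^\infty\M_\nu^{-\frac{1}{2n}}(n)<\infty$. Your chain of implications still closes using only the one direction that convexity actually provides, so the gap is repairable, but you must restructure the ending that way rather than assert the biconditional as a standalone comparison lemma.
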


This theorem is proved in \Cref{subsec:proof-thm:(in)determinacy-moment-sequence}. We call it a Tauberian type result since assumptions on the moment transform alone give sufficient information regarding the measure for concluding indeterminacy. In \Cref{subsec:log-levy} below we shall provide an application of this criterion to the time-dependent moment problem for the law of log-L\'evy processes. Invoking now a useful result from Berg and Dur\'an \cite[Lemma 2.2 and Remark 2.3]{berg:2004} regarding factorization of moment sequences in relation to the moment problem, we deduce the following corollary of \Cref{thm:(in)determinacy-moment-sequence}\ref{item-1:thm:(in)determinacy-moment-sequence}.
\begin{corollary}
\label{cor:indeterminacy-moment-sequence-factorization}
Let  $\M_\mathcal{V}$ be the Stieltjes moment sequence of a positive measure $\mathcal{V}$ and suppose that, for $n \geq 0$,
\begin{equation*}
\M_\mathcal{V}(n) = \M_\nu(n) \hspace{1pt} \mathfrak{m}(n),
\end{equation*}
where $\M_\nu$ is a Stieltjes moment sequence that satisfies the assumptions of \Cref{thm:(in)determinacy-moment-sequence}, and $(\mathfrak{m}(n))_{n \geq 0}$ is a non-vanishing (i.e.~$\mathfrak{m}(n) \neq 0$ for all $n$) Stieltjes moment sequence. Then,
\begin{equation*}
\int^\infty (uG'(u)-G(u)) G''(u) e^{-\frac{G'(u)}{2}} du < \infty \implies \mathcal{V} \text{ is moment indeterminate.}
\end{equation*}
\end{corollary}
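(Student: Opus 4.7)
The plan is to combine \Cref{thm:(in)determinacy-moment-sequence}\ref{item-1:thm:(in)determinacy-moment-sequence} with the Berg--Dur\'an factorization lemma cited in the sentence immediately preceding the corollary. First, since by hypothesis $\M_\nu$ satisfies conditions \ref{item-a:thm:(in)determinacy-moment-sequence} and \ref{item-b:thm:(in)determinacy-moment-sequence} of \Cref{thm:(in)determinacy-moment-sequence}, the convergence assumption
\begin{equation*}
\int^\infty (uG'(u)-G(u)) G''(u) e^{-\frac{G'(u)}{2}} du < \infty
\end{equation*}
on the profile $G$ of $\M_\nu$ implies, by \Cref{thm:(in)determinacy-moment-sequence}\ref{item-1:thm:(in)determinacy-moment-sequence} applied to $\nu$, that $\nu$ is moment indeterminate.

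Next, I would invoke \cite[Lemma 2.2 and Remark 2.3]{berg:2004}. Their result states that if a Stieltjes moment sequence factors as the product of two Stieltjes moment sequences and one of the two factors is non-vanishing, then moment indeterminacy of the other factor is inherited by the product. Applied to the factorization $\M_\mathcal{V}(n)=\M_\nu(n)\mathfrak{m}(n)$ from the assumption, together with the indeterminacy of $\nu$ established in the first step and the non-vanishing of $\mathfrak{m}$, this yields at once that $\mathcal{V}$ is moment indeterminate, which is the desired conclusion.

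The proof is therefore essentially a two-line transitive chain through two cited results, and the only point that requires any verification is that the notion of \emph{non-vanishing moment sequence} used in the statement of the corollary matches the hypothesis appearing in \cite[Lemma 2.2 and Remark 2.3]{berg:2004}; this will just be a matter of recalling that $\mathfrak{m}(n)>0$ for all $n\geq 0$ is exactly what Berg and Dur\'an need in order to divide by $\mathfrak{m}$ when transferring a Stieltjes class of $\mathcal{V}$ back to one of $\nu$. No further analysis of the asymptotically parabolic function $G$, of the $\Leb^1$-domination in \eqref{eq:analytic condition on Mellin transform}, or of the sequence $\mathfrak{m}$ is needed beyond these two invocations.
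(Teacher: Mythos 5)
Your proposal matches the paper's own (one-line) deduction exactly: \Cref{thm:(in)determinacy-moment-sequence}\ref{item-1:thm:(in)determinacy-moment-sequence} gives the indeterminacy of $\nu$, and the Berg--Dur\'an factorization result \cite[Lemma 2.2 and Remark 2.3]{berg:2004} transfers it to $\mathcal{V}$ through the non-vanishing factor $\mathfrak{m}$. This is precisely how the paper deduces the corollary, so nothing further is needed.
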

We proceed by offering a few remarks regarding our criterion in relation to the recent literature on the moment problem. In \Cref{thm:(in)determinacy-moment-sequence}\ref{item-1:thm:(in)determinacy-moment-sequence} we provide a checkable criterion for indeterminacy based solely on properties of the moment transform, which seems to be new in the context of the moment problem. For example, the assumption that $\M_\nu(n) \geq cn^{(2+\epsilon)n}$ for some constants $c, \epsilon > 0$ and $n$ large enough, together with
\begin{equation}
\label{eq:condition L}
\frac{x\nu'(x)}{\nu(x)} \text{ decreases to } - \infty \text{ as } x \to \infty,
\end{equation}
where $\nu(dx) = \nu(x)dx$, allows one to conclude indeterminacy, see Theorem 5 in the nice survey \cite{lin:2017a}. The condition expressed by \eqref{eq:condition L}, which goes back to \cite{lin:1997}, is called Lin's condition in the literature, and involves the a priori assumption of the existence and differentiability of the density on a neighborhood of infinity.

In the same spirit, the integrability condition in \Cref{thm:(in)determinacy-moment-sequence}\ref{item-b:thm:(in)determinacy-moment-sequence} can be replaced by (but is not equivalent to) the assumption that $\nu(dx) = \nu(x)dx$ is such that
\begin{equation}
\label{eq:log-concavity condition}
x \mapsto -\log \nu(e^x) \text{ is convex} \text{ for } x \text{ large enough}.
\end{equation}
Under the assumption in \eqref{eq:log-concavity condition}, Pakes proved in \cite{pakes:2001} that Carleman's criterion becomes an equivalent condition for moment determinacy. However, as with Lin's condition, this involves assumptions on both the moment sequence and the density, and is a rather strong geometric requirement on the density itself. We point out that, as by-product of \Cref{thm:(in)determinacy-moment-sequence}, we have $\nu(dx) = \nu(x)dx, x> 0$, and that $\nu(e^x)$ satisfies a less stringent asymptotic condition, which is in fact implied by \eqref{eq:log-concavity condition}, see~\cite[Theorem 2.2 and Equation (4.5)]{balkema:1995}.

In \Cref{thm:(in)determinacy-moment-sequence}\ref{item-2:thm:(in)determinacy-moment-sequence} we are able to show, under a further mild assumption on $G$, that Carleman's criterion becomes necessary and sufficient for determinacy. The additional assumption on $G$ is what allows us to connect the condition in \eqref{eq:G* Krein integral} to the finiteness of the sum in \eqref{eq:Carleman's criterion}, which is the harder of the two implications to prove. While both Lin's condition in \eqref{eq:condition L} and Pakes' condition \eqref{eq:log-concavity condition} yield converses to Carleman's criterion, we avoid having to make distinct assumptions on the moment transform and the density.

\subsection{An Abelian type tail condition for determinacy, and a converse for Krein's criterion}

The celebrated Krein's criterion, refined by Pedersen \cite{pedersen:1998}, states that if $\nu(dx) = \nu(x)dx$ and, for some $x_0 \geq 0$,
\begin{equation*}
\int_{x_0}^\infty \frac{-\log \nu(x^2)}{1+x^2}dx < \infty
\end{equation*}
then $\nu$ is moment indeterminate (the case $x_0 = 0$ yields the original version of Krein's theorem). It is also well-known that this condition is not necessary for moment indeterminacy, see the counterexample in~\cite{pedersen:1998}. In this section we provide conditions for moment determinacy that are stated in terms of the measure directly, which under some additional assumptions yields a converse to the refined Krein's criterion.

To state our result we define the set of \emph{admissible} asymptotically parabolic functions as
\begin{equation*}
\apm = \left\lbrace G_* \in \ap; \lim_{x \to \infty} \frac{G_*(x)}{x} = \infty \right\rbrace
\end{equation*}
and note that not all asymptotically parabolic functions are admissible, see e.g.~the last row of Table \ref{tab:asymptotic parabolic} in \Cref{subsec:generalization-Hardy}. The admissibility condition is equivalent to the condition that, for large enough $x$, the function $x \mapsto e^{G_*(\log x)}$ grows faster than any polynomial. Hence our reason for assuming admissibility is to avoid trivial situations in terms of the moment problem. We suggestively write $G_*$ as it will turn out that $G_*$ will be the Legendre transform of a function $G \in \ap$, see the beginning of \Cref{sec:core-proofs} and in particular \Cref{lem:admissible G closed under Legendre transform} for further details.

Next, we write, for suitable functions $f$ and $g$, $f(x) \stackrel{\infty}{=} \bigO(g(x))$ if $\limsup\limits_{x \to \infty} \left| \frac{f(x)}{g(x)} \right| < \infty$ and $f(x) \stackrel{\infty}{\asymp} g(x)$ if $f \stackrel{\infty}{=} \bigO(g(x))$ and $g(x) \stackrel{\infty}{=} \bigO(f(x))$. We also write $\overline{\nu}(x)=\int_x^{\infty}\nu(dx)$ for the tail of a probability measure $\nu$. The following result may be thought of as the Abelian counterpart to the Tauberian type result in \Cref{thm:(in)determinacy-moment-sequence}.


\begin{theorem}
\label{thm:(in)determinacy-density}
Let $\nu$ be a probability measure with all positive moments finite.
\begin{enumerate}
\item \label{item-1:thm:(in)determinacy-density} Suppose that there exists $G_* \in \apm$ such that either
\begin{equation}
\label{eq:big-O tail}
\overline{\nu}(x) \stackrel{\infty}{=} \bigO(e^{-G_*(\log x)}),
\end{equation}
or, if $\nu(dx) = \nu(x)dx$, that
\begin{equation}
\label{eq:big-O density}
\nu(x) \stackrel{\infty}{=} \bigO(e^{-G_*(\log x)}).
\end{equation}
 Then, writing $\overleftarrow{G_*'}$ for the inverse of the continuous and increasing function $G_*'$,
\begin{equation}
\label{eq:determinate sum}
\sum^\infty e^{-\frac{\overleftarrow{G_*'}(n)}{2}} = \infty \implies \nu \text{ is moment determinate}.
\end{equation}
\item \label{item-2:thm:(in)determinacy-density} If in addition
\begin{equation}
\nu(x) \stackrel{\infty}{\asymp} e^{-G_*(\log x)}
\end{equation}
 and $\lim\limits_{x \to \infty} G_*'(x)e^{-\frac{x}{2}} < \infty$, then
\begin{align}
\label{eq:equivalent sum}
\nu \text{ is moment indeterminate}
& \iff \sum^\infty e^{-\frac{\overleftarrow{G_*'}(n)}{2}} < \infty \\
& \iff \int^\infty G_*(x)e^{-\frac{x}{2}}dx < \infty \\
& \iff \int^\infty \frac{-\log\nu(x^2)}{1+x^2} dx < \infty.
\end{align}
\end{enumerate}
\end{theorem}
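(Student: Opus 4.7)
The overarching strategy is to apply Laplace's method to the moment integral, so that the tail/density asymptotic expressed via $G_*$ is converted into a moment asymptotic expressed via its Legendre dual $G(n) := n\gamma(n) - G_*(\gamma(n))$, which satisfies $G'(u) = \gamma(u)$. Since $G_* \in \apm \subset \ap$, the function $G_*''$ is locally uniformly slowly varying on its natural scale $1/\sqrt{G_*''}$, which is exactly what legitimizes Laplace's method at the saddle $u = \gamma(n)$ of $nu - G_*(u)$. The Legendre-duality lemma alluded to in the discussion preceding the statement (see \Cref{lem:admissible G closed under Legendre transform}) guarantees $G \in \ap$, so that \Cref{thm:(in)determinacy-moment-sequence} applies to $\M_\nu$ once its analytic hypothesis \ref{item-b:thm:(in)determinacy-moment-sequence} is checked. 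For part \ref{item-1:thm:(in)determinacy-density} this machinery can actually be bypassed: only a one-sided estimate on the moments is available, but that is enough to feed into classical Carleman.

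For part \ref{item-1:thm:(in)determinacy-density}, the tail hypothesis is reduced to a density-style bound via $\M_\nu(n) = n\int_0^\infty x^{n-1}\overline{\nu}(x)\,dx$, and in both cases the substitution $x = e^u$ yields, by Laplace's method at $u=\gamma(n)$,
\[
\M_\nu(n) \leq D(n)\,e^{G(n)},
\]
with $D(n) \asymp n/\sqrt{G_*''(\gamma(n))}$ of at most polynomial growth. Raising to the $-1/(2n)$-th power,
\[
\M_\nu(n)^{-\frac{1}{2n}} \geq c\,\exp\!\left(-\frac{\gamma(n)}{2} + \frac{G_*(\gamma(n))}{2n}\right) \geq c\,e^{-\gamma(n)/2},
\]
for $n$ large, since admissibility $G_*(x)/x \to \infty$ forces $G_*(\gamma(n)) \geq 0$. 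Hence $\sum^\infty e^{-\gamma(n)/2} = \infty$ implies $\sum^\infty \M_\nu^{-1/(2n)}(n) = \infty$, and classical Carleman delivers determinacy.

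For part \ref{item-2:thm:(in)determinacy-density}, the two-sided density asymptotic upgrades the Laplace bound into $\M_\nu(n) \stackrel{\infty}{\sim} K(n)e^{G(n)}$ with $\log K(n) = O(\log n)$, and absorbing $\log K$ into $G$ (which preserves membership in $\ap$) verifies hypothesis \ref{item-a:thm:(in)determinacy-moment-sequence} of \Cref{thm:(in)determinacy-moment-sequence} for the Legendre dual $G$. Hypothesis \ref{item-b:thm:(in)determinacy-moment-sequence} — uniform $\Leb^1$ domination of the normalized Mellin transform on vertical lines — is verified by a saddle-point expansion of $\M_\nu(n+iy/\eta(n))$ that exploits the local uniformity built into $\ap$ to yield Gaussian-type decay in $y$ with $n$-uniform constants. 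The extra hypothesis $\lim_{x\to\infty}G_*'(x)e^{-x/2} < \infty$ translates, via $u = G_*'(\gamma(u))$, into the condition $\lim_{u\to\infty} u\,e^{-G'(u)/2} < \infty$ of \Cref{thm:(in)determinacy-moment-sequence}\ref{item-2:thm:(in)determinacy-moment-sequence}, giving indeterminacy $\iff \int^\infty e^{-\gamma(u)/2}\,du < \infty$. Monotonicity of the integrand equates integral and series; the change of variables $u = G_*'(x)$ rewrites the integral as $\int^\infty G_*''(x)e^{-x/2}\,dx$, and two integrations by parts (with boundary terms killed by the additional hypothesis) convert its convergence into that of $\int^\infty G_*(x)e^{-x/2}\,dx$. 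Finally, $-\log\nu(x^2) \stackrel{\infty}{\asymp} G_*(2\log x)$ combined with $y = 2\log x$ shows $\int^\infty \frac{-\log\nu(x^2)}{1+x^2}\,dx$ converges iff $\int^\infty G_*(y)e^{-y/2}\,dy$ does, closing the chain.

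The principal technical hurdle I expect is the verification of hypothesis \ref{item-b:thm:(in)determinacy-moment-sequence} in part \ref{item-2:thm:(in)determinacy-density}, where one needs an $n$-uniform saddle-point expansion of the Mellin transform on an imaginary line that is dominated in $y$ by a single integrable function; the existence of a saddle is elementary, but the uniformity is subtle and calls on the full strength of the $\ap$-condition. The Laplace upper bound underpinning part \ref{item-1:thm:(in)determinacy-density}, while conceptually routine, also rests on the regularity of $G_*''$ through the saddle width.
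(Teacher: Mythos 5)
Your part~\ref{item-1:thm:(in)determinacy-density} is essentially the paper's argument: reduce the tail bound to a density-type bound (the paper uses the stationary-excess density $\overline{\nu}(x)/\M_\nu(1)$, you use $\M_\nu(n)=n\int_0^\infty x^{n-1}\overline{\nu}(x)\,dx$, which amounts to the same thing), apply a Laplace/saddle-point estimate at $u=\gamma(n)$ (the paper invokes Theorem~C of \cite{balkema:1995} for the comparison integral $\int e^{ny-G_*(y)}dy$), absorb the polynomial prefactor after taking the $-\tfrac{1}{2n}$-th power, and finish with classical Carleman. That part is sound.

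Part~\ref{item-2:thm:(in)determinacy-density} has a genuine gap. You propose to obtain the indeterminacy direction by verifying the hypotheses of \Cref{thm:(in)determinacy-moment-sequence} for $\M_\nu$, but neither hypothesis follows from the assumption $\nu(x)\stackrel{\infty}{\asymp}e^{-G_*(\log x)}$. Hypothesis~\ref{item-a:thm:(in)determinacy-moment-sequence} demands the exact asymptotic $\M_\nu(n)\stackrel{\infty}{\sim}e^{G(n)}$; a two-sided $\asymp$ bound on the density only yields $\log\M_\nu(n)=G(n)+\bigO(\log n)$, and an error term known only up to $\bigO$ cannot be ``absorbed into $G$'' so as to produce a new $\tilde G\in\ap$ with $\log\M_\nu(n)-\tilde G(n)\to 0$. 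More seriously, hypothesis~\ref{item-b:thm:(in)determinacy-moment-sequence} is an analytic condition on $\M_\nu$ along imaginary lines: uniform $\Leb^1$-domination of $y\mapsto|\M_\nu(n+iy/\eta(n))/\M_\nu(n)|$ forces the Esscher-transformed densities to be continuous with well-controlled Fourier decay, whereas a density pinched between two envelopes by $\asymp$ may oscillate arbitrarily badly (it need not even be continuous), so no saddle-point expansion ``with $n$-uniform constants'' is available — the obstruction you flag as a ``technical hurdle'' is in fact fatal to this route. The paper avoids the Mellin transform entirely in this part: the lower bound $\nu(x)\geq C^{-1}e^{-G_*(\log x)-\log x}$ gives $-\log\nu(x^2)\leq G_*(2\log x)+\log(Cx^2)$, hence $\int^\infty\frac{-\log\nu(x^2)}{1+x^2}dx\leq K+c\int^\infty G_*(y)e^{-\frac{y}{2}}dy$, so finiteness of the latter yields indeterminacy directly via Pedersen's refinement of Krein's criterion; the determinacy direction comes from part~\ref{item-1:thm:(in)determinacy-density}, and the integration by parts (legitimized by $\lim_{x\to\infty}G_*'(x)e^{-\frac{x}{2}}<\infty$, equivalently $\lim_{u\to\infty}ue^{-\frac{G'(u)}{2}}<\infty$) links $\int^\infty G_*(y)e^{-\frac{y}{2}}dy$ to $\int^\infty e^{-\frac{\gamma(y)}{2}}dy$ and closes the cycle of implications. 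Your own closing observation about the Krein integral already contains the correct mechanism — you just need to use Pedersen--Krein as the indeterminacy criterion there instead of \Cref{thm:(in)determinacy-moment-sequence}.
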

This theorem is proved in \Cref{subsec:proof-thm:(in)determinacy-density}. It leads to a generalization of Hardy's condition for moment determinacy, which was proved by Hardy in a series of papers \cite{hardy:1917,hardy:1918} and seemed to have gone unnoticed in the probabilistic/moment problem literature until the recent exposition by Stoyanov and  Lin \cite{stoyanov:2013}, see also \cite{lin:2017a}. The criterion states that if
\begin{equation}
\label{eq:Hardy's condition}
\int_0^\infty e^{c\sqrt{x}}\nu(dx) < \infty \text{ for some } c > 0
\end{equation}
then $\nu$ is determinate.

\begin{corollary}
\label{cor:generalized-Hardy}
Let $\nu$ be a probability measure with all positive moments finite.
\begin{enumerate}
\item Hardy's condition is satisfied, i.e.~\eqref{eq:Hardy's condition} holds, if and only if
\begin{equation*}
\overline{\nu}(x) \stackrel{\infty}{=} \bigO(e^{-c\sqrt{x}}).
\end{equation*}
Consequently Hardy's condition implies that \eqref{eq:big-O tail} and \eqref{eq:determinate sum} of \Cref{thm:(in)determinacy-density}\ref{item-1:thm:(in)determinacy-density} are satisfied, with $x \mapsto G_*(x) = ce^{\frac{x}{2}} \in \apm$.
\item \label{item-2:cor:generalized-Hardy} If for some $\alpha>0$
\begin{equation}
\label{eq:big O Hardy example}
\nu(x) \stackrel{\infty}{\asymp} e^{-\frac{\alpha\sqrt{x}}{\log x}}
\end{equation}
then $\nu$ does not satisfy Hardy's criterion, i.e.~
\begin{equation*}
\int_0^\infty e^{c\sqrt{x}}\nu(x)dx = \infty \quad \forall c > 0,
\end{equation*}
yet $\nu$ is moment determinate.
\end{enumerate}
\end{corollary}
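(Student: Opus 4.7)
The equivalence between Hardy's condition and the tail bound is essentially Markov's inequality in one direction and integration by parts in the other, with a small loss in the exponential constant. For the forward direction, setting $M := \int_0^\infty e^{c\sqrt{y}}\nu(dy) < \infty$, for any $0 < c' < c$ and $x$ large I would estimate
\begin{equation*}
x^{1/2}e^{c'\sqrt{x}}\bar{\nu}(x) \leq x^{1/2}e^{-(c-c')\sqrt{x}}\int_x^\infty e^{c\sqrt{y}}\nu(dy) \leq M\, x^{1/2}e^{-(c-c')\sqrt{x}},
\end{equation*}
which is bounded in $x$. Conversely, integration by parts gives
\begin{equation*}
\int_0^\infty e^{c'\sqrt{y}}\nu(dy) = 1 + \int_0^\infty \frac{c'}{2\sqrt{y}}e^{c'\sqrt{y}}\bar{\nu}(y)\,dy,
\end{equation*}
and substituting $\bar{\nu}(y) \stackrel{\infty}{=} \bigO(y^{-1/2}e^{-c\sqrt{y}})$ with $c > c'$ yields a tail integrand of order $y^{-1}e^{-(c-c')\sqrt{y}}$, which is integrable. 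To invoke \Cref{thm:(in)determinacy-density}\ref{item-1:thm:(in)determinacy-density} I would then pick $G_*(y) = ce^{y/2} + \tfrac{1}{2}y$: with $G_*''(y) = \tfrac{c}{4}e^{y/2}$ the $\ap$-condition is a direct check, admissibility $G_*(y)/y \to \infty$ is clear, and inverting $G_*'(y) = \tfrac{c}{2}e^{y/2} + \tfrac{1}{2}$ gives $\gamma(n) = 2\log\!\tfrac{2n-1}{c}$, so $e^{-\gamma(n)/2} = c/(2n-1)$ and the series in \eqref{eq:determinate sum} diverges.

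\textbf{Part (2).} The failure of Hardy's criterion uses only the lower bound in $\nu(x)\stackrel{\infty}{\asymp}e^{-\alpha\sqrt{x}/\log x}$: for $x$ large, $e^{c\sqrt{x}}\nu(x) \geq c_0\, e^{\sqrt{x}(c - \alpha/\log x)}$, and since $c - \alpha/\log x \to c > 0$ the integrand grows exponentially, so $\int^\infty e^{c\sqrt{x}}\nu(x)\,dx = \infty$ for every $c > 0$. For determinacy I would invoke \Cref{thm:(in)determinacy-density}\ref{item-1:thm:(in)determinacy-density} again, this time with $G_*(y) = \alpha e^{y/2}/y$; the upper bound furnishes $\nu(x) \stackrel{\infty}{=} \bigO(e^{-G_*(\log x)})$. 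A routine differentiation yields $G_*''(y) \sim \alpha e^{y/2}/(4y)$ as $y \to \infty$, so $G_*''(u)^{-1/2} \sim 2\sqrt{u/\alpha}\,e^{-u/4}$ and the shift $wG_*''(u)^{-1/2}$ tends to $0$ locally uniformly in $w$. The ratio $G_*''(u + wG_*''(u)^{-1/2})/G_*''(u) \to 1$ then follows since the exponential factor contributes $e^{w\sqrt{u/\alpha}\,e^{-u/4}} \to 1$ and the $1/y$ factor is perturbed by $\littleo(1)$; admissibility is immediate, so $G_* \in \apm$.

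It remains to show $\sum^\infty e^{-\gamma(n)/2} = \infty$. From $G_*'(y) \sim \alpha e^{y/2}/(2y)$, the relation $G_*'(\gamma(n)) = n$ asymptotically reads $\gamma(n)/2 = \log(2n\gamma(n)/\alpha) + \littleo(1)$; bootstrapping first to $\gamma(n)/\log n \to 2$ and then iterating once yields $\gamma(n) = 2\log n + 2\log\log n + \littleo(1)$, so $e^{-\gamma(n)/2} \sim 1/(n\log n)$ and the series diverges, giving moment determinacy. The main obstacle is precisely this asymptotic inversion: a missed $\log\log n$ correction would shift the series between convergence and divergence, so it has to be extracted carefully from the implicit relation rather than from the leading-order behavior alone.
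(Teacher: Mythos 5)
Your proposal is correct, and Part (1) together with the ``Hardy fails'' half of Part (2) follow essentially the paper's own route (the paper uses Fubini where you use a Markov-type bound in the forward direction of (1), which costs you an arbitrarily small loss $c \mapsto c'<c$ in the exponent; since Hardy's condition is quantified over ``some $c>0$'' this is harmless, and the paper itself loses the constant in the reverse direction). The genuine difference is in the determinacy half of Part (2). The paper verifies the hypotheses of \Cref{thm:(in)determinacy-density}\ref{item-2:thm:(in)determinacy-density} --- the two-sided bound, $s_{G_*}$ self-neglecting, and $\lim_{x\to\infty}G_*'(x)e^{-x/2}=0$ --- and then concludes from the equivalent integral criterion, where $\int^\infty G_*(x)e^{-x/2}\,dx=\alpha\int^\infty x^{-1}dx=\infty$ is a one-line computation; no asymptotic inversion of $G_*'$ is needed. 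You instead invoke only \Cref{thm:(in)determinacy-density}\ref{item-1:thm:(in)determinacy-density}, which requires just the upper bound on $\nu$ and fewer side conditions, but forces you to extract $\gamma(n)=2\log n+2\log\log n+\mathrm{O}(1)$ from $G_*'(\gamma(n))=n$ so that $e^{-\gamma(n)/2}\asymp 1/(n\log n)$ and the series diverges. Your bootstrapping is correct and you are right that the $\log\log n$ term is exactly where the danger lies; the trade-off is that the paper's route replaces this delicate inversion by a trivial integral at the price of checking the extra hypotheses of item \ref{item-2:thm:(in)determinacy-density} (which are themselves routine here). Both arguments are sound.
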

This corollary is proved in \Cref{subsec:proof-cor:generalized-Hardy}. Now, we proceed with some remarks on the two last results. The fact that \Cref{thm:(in)determinacy-density}\ref{item-1:thm:(in)determinacy-density} leads to a generalization of Hardy's condition shows that the assumptions we make are rather weak yet still yield the moment determinacy of $\nu$. Note that the requirement in \eqref{eq:big-O tail} or in \eqref{eq:big-O density} does not trivially imply moment determinacy since a function $G_* \in \apm$ may be sublinear at the log-scale, e.g.~$G_*(\log x)=x^\alpha$ for $\alpha > 0$.

It was shown in Stoyanov and Lin \cite{stoyanov:2013} that Hardy's condition implies Carleman's criterion, so that the same argument that disproves the necessity of Carleman's criterion also shows that Hardy's condition is not necessary for moment determinacy. This argument, which goes back to Heyde \cite{heyde:1963}, involves the subtle manipulation of point mass at the origin. In \Cref{cor:generalized-Hardy}\ref{item-2:cor:generalized-Hardy} we are able to give explicit examples of densities, characterized only by their large asymptotic behavior, for which Hardy's condition fails yet, by \Cref{thm:(in)determinacy-density}\ref{item-2:thm:(in)determinacy-density}, Carleman's criterion holds.

In \Cref{thm:(in)determinacy-density}\ref{item-2:thm:(in)determinacy-density} we give necessary and sufficient conditions on the density $\nu$ for moment indeterminacy, and also show that Krein's criterion becomes necessary and sufficient in our context. The existing criteria in the literature that give converses to Krein's theorem require either the differentiability of the density, such as Lin's condition in \eqref{eq:condition L}, or an exact representation for the density, e.g.~\cite[Theorem 4]{pakes:2001}, neither of which we suppose.


Finally, we mention that, in  \cite{patie:2018}, we apply \Cref{thm:(in)determinacy-density} to study the log-L\'evy moment problem for the so-called Berg-Urbanik semigroups.

\section{Applications} \label{sec:applications}

\subsection{The log-L\'evy moment problem} \label{subsec:log-levy}



One of the most famous indeterminate measures is the log-normal distribution, and the indeterminacy of this measure has the consequence that the random variable $e^{B_t}$ is moment indeterminate for all $t > 0$, where $B = (B_t)_{t \geq 0}$ is a standard Brownian motion. In this section we apply \Cref{thm:(in)determinacy-moment-sequence} to study this time-dependent moment problem when $B$ is replaced by a L\'evy process (admitting all exponential moments), which we call the log-L\'evy moment problem.

We recall that a (one-dimensional) L\'evy process $Y = (Y_t)_{t \geq 0}$ is a $\R$-valued stochastic process with stationary and independent increments, that is continuous in probability, and such that $Y_0 = 0$ almost surely (a.s.). Such processes are fully characterized by the law of $Y_1$, which is known to be infinitely divisible and whose characteristic exponent is given by
\begin{equation}
\label{eq:LK-representation}
\Psi(u) = bu + \frac{1}{2}\sigma^2u^2 + \int_{-\infty}^\infty \left(e^{u r} - 1 - u r \mathbb{I}_{\{|r| \leq 1\}}\right) \Pi(dr), \quad u \in i\R,
\end{equation}
with $b \in \R$, $\sigma \geq 0$, and $\Pi$ a $\sigma$-finite, positive measure satisfying $\Pi(\{0\}) = 0$ and the integrability condition $
\int_{-\infty}^\infty \min(1,r^2) \ \Pi(dr) < \infty.$ As we are interested in the log-L\'evy moment problem we only consider L\'evy processes admitting all positive exponential moments, i.e.~$\E[e^{uY_t}] < \infty$ for all $u, t \geq 0$. This condition is equivalent to $\Psi$ admitting an analytical extension to the right-half plane, still denoted by $\Psi$, which in terms of the L\'evy measure can be expressed as
\begin{equation*}
\int_1^\infty e^{ur} \Pi(dr) < \infty, \quad  u\geq 0,
\end{equation*}
see~\cite[Theorem 25.3 and Lemma 25.7]{sato:2013}. In this case we have that
\begin{equation*}
\E[e^{uY_t}] = e^{t\Psi(u)}, \quad u \geq 0.
\end{equation*}

%

\begin{theorem} Let $Y = (Y_t)_{t \geq 0}$ be a L\'evy process admitting all exponential moments.
\label{thm:threshold-log-Levy}
\begin{enumerate}
\item \label{item-1:thm:threshold-log-Levy} If in \eqref{eq:LK-representation} $\sigma^2 > 0$, then the random variable $e^{Y_t}$ is moment indeterminate for any $t > 0$.
\item \label{item-2:thm:threshold-log-Levy} If $\Psi(u) = u\log(u+1)$, $u \geq 0$, then the random variable $e^{Y_t}$ is moment determinate if and only if $t \leq 2$.
\end{enumerate}
\end{theorem}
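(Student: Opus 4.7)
The strategy is to apply \Cref{thm:(in)determinacy-moment-sequence} and \Cref{cor:indeterminacy-moment-sequence-factorization} to the moment sequence $\M_{\nu_t}(n) = e^{t\Psi(n)}$ of $e^{Y_t}$, with the natural choice $G(u) = t\Psi(u)$.

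For Part \ref{item-1:thm:threshold-log-Levy}, I would exploit the L\'evy--Khintchine decomposition to write $Y_t = \sigma B_t + Z_t$, where $B$ is a standard Brownian motion independent of the pure-jump-plus-drift L\'evy process $Z$ with exponent $\Psi_Z(u) = \Psi(u) - \tfrac{1}{2}\sigma^2 u^2$. Independence yields the factorization
\[
\M_{\nu_t}(n) = e^{\frac{1}{2}t\sigma^2 n^2} \cdot e^{t\Psi_Z(n)} =: \M_{\nu}(n)\,\mathfrak{m}(n),
\]
in which $\M_\nu$ is the (indeterminate) log-normal moment sequence and $\mathfrak{m}(n) = \E[e^{nZ_t}] > 0$ is the non-vanishing Stieltjes moment sequence of $e^{Z_t}$. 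Indeterminacy of $e^{Y_t}$ then follows from \Cref{cor:indeterminacy-moment-sequence-factorization} once the hypotheses of \Cref{thm:(in)determinacy-moment-sequence} and the indeterminacy condition \eqref{eq:G* Krein integral} are checked for $G(u) = \tfrac{1}{2}t\sigma^2 u^2$. These checks are immediate: $G'' \equiv t\sigma^2$ is constant so $G\in\ap$ and $\eta(n)\equiv\sqrt{t\sigma^2}$; the Mellin modulus collapses to $e^{-y^2/2}\in\Leb^1(\R)$ independently of $n$; and the integrand $\tfrac{1}{2}t^2\sigma^4 u^2 e^{-t\sigma^2 u/2}$ is integrable at infinity.

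For Part \ref{item-2:thm:threshold-log-Levy}, set $G(u) = tu\log(u+1)$ and compute
\[
G'(u) = t\log(u+1) + t - \tfrac{t}{u+1}, \qquad G''(u) = \tfrac{t(u+2)}{(u+1)^2} \stackrel{\infty}{\sim} \tfrac{t}{u}.
\]
Since $1/\sqrt{G''(u)}\sim\sqrt{u/t}$ is sublinear, the condition $G\in\ap$ follows directly from the definition. The decisive computation is
\[
(uG'(u)-G(u))G''(u)e^{-G'(u)/2} = \tfrac{tu^2}{u+1}\cdot\tfrac{t(u+2)}{(u+1)^2}\cdot e^{-t/2 + t/(2(u+1))}(u+1)^{-t/2} \stackrel{\infty}{\asymp} u^{-t/2},
\]
so the integral in \eqref{eq:G* Krein integral} converges if and only if $t>2$. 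For $t>2$, after verifying condition \ref{item-b:thm:(in)determinacy-moment-sequence} (see next paragraph), \Cref{thm:(in)determinacy-moment-sequence}\ref{item-1:thm:(in)determinacy-moment-sequence} yields indeterminacy. For $t\leq 2$, I would bypass the theorem entirely and invoke the classical Carleman criterion: since $\M_{\nu_t}(n) = (n+1)^{tn}$, the Carleman quotient is $\M_{\nu_t}(n)^{-1/(2n)} = (n+1)^{-t/2}$, whose sum diverges exactly on $t\leq 2$, giving moment determinacy.

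The main obstacle is the verification of condition \ref{item-b:thm:(in)determinacy-moment-sequence} of \Cref{thm:(in)determinacy-moment-sequence} in Part \ref{item-2:thm:threshold-log-Levy} for $t>2$. The starting point is the explicit identity
\[
|\M_{\nu_t}(n+is)/\M_{\nu_t}(n)| = \exp\!\Bigl(\tfrac{tn}{2}\log\bigl(1+\tfrac{s^2}{(n+1)^2}\bigr) - ts\,\arctan\!\tfrac{s}{n+1}\Bigr),
\]
and a Taylor expansion at $s = y/\eta(n)$ shows the pointwise limit as $n\to\infty$ is the Gaussian characteristic function $e^{-y^2/2}$, a local-CLT-type statement for the Esscher-tilted distribution of $\log e^{Y_t}$. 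Producing a truly uniform $\Leb^1$-dominator requires splitting into two regimes: on $|y|\lesssim\sqrt{n}$, the elementary inequalities $\log(1+x^2)\leq x^2$ and $x\arctan x\geq x^2/(1+x^2)$ deliver a Gaussian bound $\leq Ce^{-cy^2}$; on $|y|\gtrsim\sqrt{n}$, using $\arctan x\geq \pi/4$ for $x\geq 1$ yields an exponential decay $\leq Ce^{-c|y|\sqrt{n}}$; these patch into a single $\Leb^1(\R)$ dominator. All remaining pieces --- the factorization argument in Part \ref{item-1:thm:threshold-log-Levy} and Carleman's criterion in the $t\leq 2$ case --- are routine.
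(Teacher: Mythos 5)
Your proposal is correct in substance but takes a genuinely different route from the paper at two points. For Part \ref{item-1:thm:threshold-log-Levy} the paper splits $\Psi=\Psi^-+\Psi^+$ (spectrally negative part versus positive jumps), verifies the hypotheses of \Cref{thm:(in)determinacy-moment-sequence} for the full spectrally negative exponent $t\Psi^-$ (which requires bounding the oscillatory integral $\int_{-\infty}^0 e^{nr}(\cos(yr/\eta(n))-1)\Pi(dr)$), and only then invokes \Cref{cor:indeterminacy-moment-sequence-factorization} to absorb $\Psi^+$. You instead peel off the Gaussian component alone, so that the base sequence is exactly log-normal, the verification of conditions \ref{item-a:thm:(in)determinacy-moment-sequence}--\ref{item-b:thm:(in)determinacy-moment-sequence} and of \eqref{eq:G* Krein integral} becomes trivial, and \emph{all} of the drift and jump structure is swept into the non-vanishing moment sequence $\mathfrak{m}(n)=\E[e^{nZ_t}]$. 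This is a legitimate and cleaner application of \Cref{cor:indeterminacy-moment-sequence-factorization}; what it gives up is the by-product the paper extracts from its harder verification, namely \Cref{cor:asymptotic-spectrally-negative} on the density asymptotics of spectrally negative processes, which requires \Cref{prop:analogue-Tauberian-theorem} to apply to $t\Psi^-$ itself. For Part \ref{item-2:thm:threshold-log-Levy} with $t>2$ you go through \Cref{thm:(in)determinacy-moment-sequence}\ref{item-1:thm:(in)determinacy-moment-sequence} by computing the Krein-type integrand $\asymp u^{-t/2}$ directly, whereas the paper checks the extra hypothesis $\lim_u ue^{-tG'(u)/2}<\infty$ (valid precisely for $t\geq 2$) and uses the equivalence in part \ref{item-2:thm:(in)determinacy-moment-sequence} together with the convergent Carleman sum; both are sound, and your treatment of $t\leq 2$ via Carleman coincides with the paper's.

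One detail to repair: in the verification of condition \ref{item-b:thm:(in)determinacy-moment-sequence} for $\Psi(u)=u\log(u+1)$, setting $x=y/\sqrt{t(n+2)}$ the exponent is $t\bigl(\tfrac{n}{2}\log(1+x^2)-(n+1)x\arctan x\bigr)$, and the pair of inequalities you quote, $\log(1+x^2)\leq x^2$ and $x\arctan x\geq x^2/(1+x^2)$, only yields the bound $-\tfrac{t}{2}x^2=-\tfrac{y^2}{2(n+2)}$ on $|x|\leq 1$, which degenerates as $n\to\infty$ and is \emph{not} a uniform Gaussian dominator. The fix is elementary but needs inequalities in which the coefficient of $x^2$ grows linearly in $n$: for instance $\log(1+x^2)\geq x^2\log 2$ and $x\arctan x\geq \tfrac{\pi}{4}x^2$ on $|x|\leq 1$ applied to $\tfrac{n}{2}\bigl(\log(1+x^2)-2x\arctan x\bigr)-x\arctan x$ with $\log(1+x^2)\leq x\arctan x$, give an exponent $\leq -c(n+2)x^2=-c't^{-1}y^2$ there, while on $|x|\geq 1$ the identity $\log(1+x^2)-2x\arctan x=-2\int_0^x\arctan s\,ds\leq -\tfrac{\pi}{2}(x-1)$ produces a bound $\leq e^{-c\sqrt{n}|y|}$ on the complementary range; these patch into a single $\Leb^1(\R)$ dominator as you intend. (The paper itself asserts the uniform bound $e^{-Cy^2}$ from pointwise limits without this splitting, so your outline is, if anything, the more careful one; it just needs the right elementary inequalities.)
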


This Theorem is proved in \Cref{subsec:proof-thm:threshold-log-Levy}. In \Cref{thm:threshold-log-Levy}\ref{item-2:thm:threshold-log-Levy} we provide an example of a L\'evy exponent such that the log-L\'evy moment problem is determinate up to a threshold time, and then indeterminate afterwards. This phenomenon has been observed in the literature by Berg in \cite{berg:2005} for the so-called Urbanik semigroup and in \cite{patie:2018} we extend Berg's result to a large class of multiplicative convolution semigroups, which do not have a log-normal component. We also mention that we prove \Cref{thm:threshold-log-Levy}\ref{item-2:thm:threshold-log-Levy} also via an application of \Cref{thm:(in)determinacy-moment-sequence} and interestingly, the additional condition in \Cref{item-2:thm:(in)determinacy-moment-sequence} of \Cref{thm:(in)determinacy-moment-sequence} is only fulfilled for $t \geq 2$.

We point out that, as a by-product of the proof of \Cref{thm:threshold-log-Levy}\ref{item-1:thm:threshold-log-Levy}, in the case of spectrally-negative L\'evy processes, we get the following large asymptotic  behavior of their densities, valid for all $t>0$, which seems to be new in the L\'evy literature. Note that, from \cite[Ex.~29.14, p.~194]{sato:2013}, we have, for $\sigma^2 > 0$ and any fixed $t > 0$, $\mathbb{P}(Y_t \in dy) = f_t(y)dy, y\in \R$, where $y \mapsto f_t(y) \in \textrm{C}^{\infty}(\R)$, all derivatives of which tend to $0$ as $|y| \to \infty$, and where $\textrm{C}^{\infty}(\R)$ stands for the space of infinitely differentiable functions on $\R$.
\begin{corollary}
\label{cor:asymptotic-spectrally-negative}
Assume that, in \eqref{eq:LK-representation}, $\sigma^2 > 0$ and $\overline{\Pi}(0)= \Pi(0,\infty) = 0$. Then, for any fixed $t > 0$, we have the following large asymptotic behavior of the density
\begin{equation}
\label{eq:asymptotic-spectrally-negative}
f_t(t\Psi'(y)) \stackrel{\infty}{\sim}  \frac{1}{\sqrt{2\pi \sigma^2 t}} e^{-\frac{1}{2}t\sigma^2y^2+ty^2\int_{-\infty}^0 e^{yr}r\Pi(-\infty,r)dr}.
\end{equation}
\end{corollary}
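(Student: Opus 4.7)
The plan is to combine an Esscher (exponential) tilt with a local central limit theorem, in the spirit of a saddle-point approximation. Fix $t > 0$ and $y > 0$, and define the tilted probability measure $\mathbb{P}_y$ via $d\mathbb{P}_y/d\mathbb{P} = e^{yY_t - t\Psi(y)}$. Under $\mathbb{P}_y$ the variable $Y_t$ has a density $\tilde{f}^{y}_t(x) = e^{yx - t\Psi(y)} f_t(x)$ with mean $t\Psi'(y)$ and variance $t\Psi''(y)$. Inverting this relation at $x = t\Psi'(y)$ yields the saddle-point identity
\begin{equation*}
f_t(t\Psi'(y)) = e^{t(\Psi(y) - y\Psi'(y))}\,\tilde{f}^{y}_t(t\Psi'(y)).
\end{equation*}

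The first task is to rewrite $t(\Psi(y) - y\Psi'(y))$ in the desired form. From the L\'evy--Khintchine representation the drift $b$ and the standard quadratic cancel, leaving
\begin{equation*}
\Psi(y) - y\Psi'(y) = -\tfrac{1}{2}\sigma^2 y^2 + \int_{-\infty}^0 \bigl[e^{yr}(1-yr) - 1\bigr]\Pi(dr).
\end{equation*}
Since the integrand has derivative $-y^2 r e^{yr}$ and vanishes at $r=0$, it equals $y^2 \int_r^0 s e^{ys} ds$ for $r<0$. Fubini (justified by $\int \min(1,r^2)\Pi(dr) < \infty$ and the uniform boundedness of $re^{yr}$ on $(-\infty, -\epsilon]$) then produces
\begin{equation*}
\int_{-\infty}^0 \bigl[e^{yr}(1-yr) - 1\bigr]\Pi(dr) = y^2\int_{-\infty}^0 r e^{yr}\Pi(-\infty,r)\,dr,
\end{equation*}
which, multiplied by $t$, is precisely the exponent in \eqref{eq:asymptotic-spectrally-negative}.

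The second task is to establish the local CLT $\tilde{f}^{y}_t(t\Psi'(y)) \stackrel{\infty}{\sim} 1/\sqrt{2\pi\sigma^2 t}$. Because $\sigma^2 > 0$ the density $f_t$, and hence $\tilde{f}^{y}_t$, is Schwartz, so Fourier inversion applies:
\begin{equation*}
\tilde{f}^{y}_t(t\Psi'(y)) = \frac{1}{2\pi}\int_{\R} e^{-i\xi t\Psi'(y) + t[\Psi(y+i\xi) - \Psi(y)]}d\xi.
\end{equation*}
Splitting the domain at some small $\epsilon > 0$, on $|\xi| \leq \epsilon$ one Taylor-expands $\Psi(y+i\xi) - \Psi(y) - i\xi\Psi'(y) = -\frac{1}{2}\xi^2\Psi''(y) + R_y(\xi)$, rescales by $\sqrt{t\Psi''(y)}$, and uses Laplace's method to extract $1/\sqrt{2\pi t\Psi''(y)}$. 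On $|\xi| > \epsilon$ the spectral negativity $\Pi((0,\infty)) = 0$ and the identity $\Re[\Psi(y+i\xi) - \Psi(y)] = -\frac{1}{2}\sigma^2\xi^2 + \int_{-\infty}^0 e^{yr}(\cos(\xi r) - 1)\Pi(dr)$ give uniform (in $y$) Gaussian decay, so the tails are negligible. Finally, $\Psi''(y) = \sigma^2 + \int r^2 e^{yr}\Pi(dr) \to \sigma^2$ as $y\to\infty$ by dominated convergence: on $(-1,0)$ one dominates by $r^2$, and on $(-\infty,-1]$ one has $r^2 e^{yr} \leq r^2 e^r$ (for $y \geq 1$), a uniformly bounded function that tends to $0$ pointwise.

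The main obstacle is the \emph{uniformity} of the local CLT in step two. The formal saddle-point expansion is standard, but one must control the Taylor remainder $R_y(\xi)$ and the Fourier tails in a way that does not deteriorate as $y\to\infty$. The nondegenerate Gaussian component $\sigma^2 > 0$ is crucial: it supplies the $e^{-\sigma^2 \xi^2/2}$ factor that dominates the (nonpositive) jump contribution in the Fourier domain and that makes the Laplace-type argument robust. Once the uniform local CLT is secured and the tilted variance is shown to converge to $\sigma^2 t$, substituting both ingredients into the saddle-point identity yields \eqref{eq:asymptotic-spectrally-negative}.
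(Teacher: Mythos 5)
Your argument is correct, but it takes a genuinely different route from the paper's. The paper treats the statement as a true corollary: it recalls that the proof of \Cref{thm:threshold-log-Levy}\ref{item-1:thm:threshold-log-Levy} already verified hypotheses \ref{item-a:thm:(in)determinacy-moment-sequence} and \ref{item-b:thm:(in)determinacy-moment-sequence} of \Cref{thm:(in)determinacy-moment-sequence} for $\M_{\nu_t}$ with $G=t\Psi$, invokes \Cref{prop:analogue-Tauberian-theorem} (whose engine is the Balkema--Kl\"uppelberg--Stadtm\"uller Tauberian theory \cite{balkema:1995}) to get $f_t(y)\stackrel{\infty}{\sim}\frac{1}{\sqrt{2\pi t}}\sqrt{\Psi_*''(y/t)}\,e^{-t\Psi_*(y/t)}$, and then evaluates the Legendre transform at $\Psi'(y)$, computing $H(y)$ by integration by parts. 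You instead rerun the saddle-point argument from scratch: the Esscher identity $f_t(t\Psi'(y))=e^{t(\Psi(y)-y\Psi'(y))}\tilde{f}^y_t(t\Psi'(y))$ plus a local CLT for the tilted density. Your exponent computation (writing $e^{yr}(1-yr)-1=y^2\int_r^0 se^{ys}\,ds$ and applying Tonelli, legitimate since the inner integrand has constant sign) is the same manipulation as the paper's integration by parts. For the local CLT your $\epsilon$-splitting is not even needed: the bound $\Re\left[\Psi(y+i\xi)-\Psi(y)\right]\leq-\tfrac{1}{2}\sigma^2\xi^2$ --- precisely the inequality the paper uses to check \Cref{thm:(in)determinacy-moment-sequence}\ref{item-b:thm:(in)determinacy-moment-sequence} --- dominates the whole Fourier integrand uniformly in $y$, while $|\Psi''(y+z)-\sigma^2|\leq\int_{-\infty}^0 r^2e^{yr}\Pi(dr)\to 0$ uniformly on $\Re z\geq 0$ gives pointwise convergence to $e^{-t\sigma^2\xi^2/2}$, so dominated convergence settles the uniformity issue you flag as the main obstacle. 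What your route buys is self-containedness (no appeal to the asymptotically parabolic machinery); what the paper's buys is economy, since the hard analytic input is exactly the proposition it needs elsewhere and the verification of its hypotheses is recycled. One minor imprecision: $f_t$ is $C^\infty$ with derivatives vanishing at infinity but need not be Schwartz; Fourier inversion only requires integrability of the tilted characteristic function, which the Gaussian bound already supplies.
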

The proof of this corollary is given in \Cref{subsec:proof-cor:asymptotic-spectrally-negative}. When $\Pi \equiv 0$ then one can easily invert $\Psi'$ in \eqref{eq:asymptotic-spectrally-negative} to reveal the classical asymptotic for the density of a Brownian motion with drift. We point out that if $\Pi(dr)=\alpha |r|^{-\alpha-1}dr, r<0, 0<\alpha <2,$ that is the L\'evy measure of a spectrally-negative $\alpha$-stable L\'evy process, then $\int_{-\infty}^0 e^{yr} r\Pi(-\infty,r)dr=-\Gamma(2-\alpha)y^{\alpha-2}$. As an illustration, when  $\alpha=\frac{3}{2}$ and we choose $\Psi(u) = \frac{1}{2}\sigma^2 u^2 + \frac{2}{3}u^{\frac{3}{2}}$, a straightforward computation allows one to get that, for $t > 0$,
\begin{equation*}
f_t(y)\stackrel{\infty}{\sim}  \frac{1}{\sqrt{2\pi \sigma^2 t}} e^{-\frac{y^2}{2\sigma^2 t} -H(y,t)},
\end{equation*}
where $H$ is given by
\begin{equation*}
H(y,t) = \frac{\left(y+\frac{1}{2\sigma^2}\right)\sqrt{y+\frac{1}{4\sigma^2}}}{\sigma^3 t} + \frac{\left(y + \frac{1}{\sigma^2} - \sqrt{\frac{y}{\sigma^2}+\frac{1}{4\sigma^2}}\right)^\frac{3}{2}}{3\sigma^3 \sqrt{t}}.
\end{equation*}
Note that, for fixed $t$,
\begin{equation*}
H(y,t) \stackrel{\infty}{\sim} \frac{y^\frac{3}{2}}{\sigma^3 t}\left(1+\frac{\sqrt{t}}{3}\right)
\end{equation*}
and that the two terms in the above asymptotic scale differently in $t$.


%
%
%
%

\subsection{Some new and classical examples of asymptotic behavior for densities} \label{subsec:generalization-Hardy}

In the following table we list some further examples of functions $G_* \in \ap$, and state whether or not any probability density $\nu$ satisfying
\begin{equation*}
\nu(x) \stackrel{\infty}{\asymp} e^{-G_*(\log x)}
\end{equation*}
admits all moments, and if so, whether it is moment determinate, possibly as a function of some parameter.

\begingroup
\renewcommand{\arraystretch}{1.5}
\begin{table}[h!]
\centering
\begin{tabular}{llll}
$\nu(x) \stackrel{\infty}{\asymp} e^{-G_*(\log x)}$ & $\M_\nu(n) <+\infty$ & parameter & moment (in)determinacy \\ \hline
$\exp\left(-\frac{\alpha\sqrt{x}}{\log x}\right)$ & $n \in \N$ & $\alpha > 0$ & determinate $\forall \alpha > 0$ \rule{0pt}{3ex} \\
$\exp\left(-x^\beta\right)$ & $n \in \N$ & $\beta > 0$ & determinate $\iff \beta \geq \frac{1}{2}$ \\
$\exp\left(-(\log x)^\delta\right)$ & $n \in \N$ & $\delta > 1$ & indeterminate $\forall \delta > 1$ \\
$\exp\left(-\kappa(\log x) \log (\log x)\right)$ & $n \in \N$ & $\kappa > 0$ & indeterminate $ \forall \kappa > 0$ \\
$\exp\left(-(\log x)^\lambda + \log x\right)$ & $n \leq 1$ &$\lambda \in (0,1)$ & \\
\end{tabular}
\vspace{10pt}
\caption{Examples of asymptotically parabolic functions and moment (in)determinacy of $\nu$.} \label{tab:asymptotic parabolic}
\end{table}
\endgroup

The first row corresponds to \eqref{eq:big O Hardy example} of \Cref{cor:generalized-Hardy}. The example from the second row of Table \ref{tab:asymptotic parabolic} is well-known in the literature. The authors in \cite{stoyanov:2013} use it to illustrate that the exponent 1/2 (i.e.~square root) in Hardy's condition cannot be improved, and in this sense Hardy's condition is the optimal version of Cramer's condition for moment determinacy. As can be readily checked, the function $x \mapsto e^{\beta x} \in \apm$, for all $\beta > 0$, and the condition in \eqref{eq:determinate sum} of \Cref{thm:(in)determinacy-density}\ref{item-2:thm:(in)determinacy-density} organically reveals the threshold value of $\beta = \frac{1}{2}$.


This example also illustrates how a natural transformation of functions $G_*\in \ap$ influences the moment determinacy of $\nu(x) \stackrel{\infty}{\asymp} e^{-G_*(\log x)}$. For $c > 0$ let $\operatorname{d}_c$ denote the dilation operator, acting on functions $f:\R_+ \to \R$ via
\begin{equation*}
\operatorname{d}_c f (x) = f(cx).
\end{equation*}
From the fact that $\ap$ is a convex cone, we get that, for any $c > 0$ and $G_* \in \ap$, $cG_* \in \ap$. However, we also have, for any $c > 0$ and $G_* \in \ap$, that $\operatorname{d}_c G_* \in \ap$ since
\begin{equation*}
\left(\operatorname{d}_c G_*\right)''(x) = c^2 G_*''(cx)
\end{equation*}
and hence the defining properties of $G_*$ in \eqref{eq:definition-ap} carry over to $\operatorname{d}_c G_*$. Now let $G_*(x) = e^{\beta x}$ and consider $\nu(x) \stackrel{\infty}{\asymp} \exp(-x^\beta)$. Then taking $cG_*$ leads to $\nu(x) \stackrel{\infty}{\asymp} \exp(-cx^\beta)$, which is moment determinate if and only if $\beta \geq \frac{1}{2}$, independently of $c > 0$, while taking $\operatorname{d}_c G_*$ leads to $\nu(x) \stackrel{\infty}{\asymp} \exp(-x^{c\beta})$, which is moment determinate if and only if $c \beta \geq \frac{1}{2}$.

\section{Proofs} \label{sec:core-proofs}


Before we begin with the proof of the main results we introduce some notation and  state and prove some preliminary lemmas that will be useful below. For  $a \geq -\infty$,  we say that a function $s:(a,\infty) \to (0,\infty)$ is  \emph{self-neglecting} if
\begin{equation*}
\lim\limits_{u \to \infty} \frac{s(u+ws(u))}{s(u)} = 1 \quad \text{ locally uniformly in } w \in \R.
\end{equation*}
Hence a function $G \in \ap$ if and only if $G \in \mathrm{C}_+^2((a,\infty))$ and its \emph{scale function} $s_G(u) = (G''(u))^{-\frac{1}{2}}$ is self-neglecting. Note that the self-neglecting property is closed under asymptotic equivalence, that is if $s(u) \stackrel{\infty}{\sim} p(u)$ and $s$ is self-neglecting then $p$ is self-neglecting. We refer to \cite[Section 2.11]{bingham:1989} for further information regarding self-neglecting functions.
Next, a function $b$ is said to be \emph{flat} with respect to $G \in \ap$ if
\begin{equation*}
\lim\limits_{u \to \infty} \frac{b(u+ws_G(u))}{b(u)} = 1 \quad \text{locally uniformly in } w \in \R,
\end{equation*}
where $s_G$ is the scale function of $G$. It is immediate from the definition that both $s_G$ and $1/s_G$ are flat with respect to $G$. In the following lemma we collect some results regarding self-neglecting and flat functions. They are essentially known in the literature, however, for sake of completeness,  we provide  their proofs. For two functions $f$ and $g$ we write $f(u)\stackrel{\infty}{=} \littleo(g(u))$ if $\lim\limits_{u \to \infty} \left|\frac{f(u)}{g(u)}\right| = 0$.
\begin{lemma} $ $
\label{lem:self-neglecting-flat}
\begin{enumerate}
\item \label{item-1:lem:self-neglecting-flat} Let $s:(a,\infty) \to (0,\infty)$, $a \geq -\infty$, be self-neglecting. Then $s(u) \stackrel{\infty}{=} \littleo(u)$.
\item \label{item-2:lem:self-neglecting-flat} Let $b$ be flat with respect to $G \in \ap$. Then,
\begin{equation*}
\lim\limits_{u \to \infty} \frac{\log b(u)}{G(u)} = 0.
\end{equation*}
\end{enumerate}
\end{lemma}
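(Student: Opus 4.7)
The plan is to prove both statements by iterating the local uniform convergence that defines self-neglecting functions and flatness.

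For part \ref{item-1:lem:self-neglecting-flat}, I would argue by contradiction using a ``pullback'' to a fixed base point. Suppose $\limsup_{u\to\infty} s(u)/u > 0$ and pick $\epsilon > 0$ together with a sequence $u_n \uparrow \infty$ with $s(u_n) \geq \epsilon u_n$. Fix any $x_0 \in (a, \infty)$ and set $t_n = (x_0 - u_n)/s(u_n)$. The lower bound forces $t_n \in [-1/\epsilon, 0]$, a fixed compact subset of $\R$, so the local uniform convergence in the self-neglecting property yields
\begin{equation*}
\frac{s(x_0)}{s(u_n)} = \frac{s(u_n + t_n s(u_n))}{s(u_n)} \longrightarrow 1.
\end{equation*}
Consequently $s(u_n) \to s(x_0) < \infty$, contradicting $s(u_n) \geq \epsilon u_n \to \infty$.

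For part \ref{item-2:lem:self-neglecting-flat}, I would iterate flatness along a discretization adapted to the scale $s_G$. Given $\eta > 0$, use flatness at $w = 1$ to pick $U = U(\eta)$ so that $|\log b(u + s_G(u)) - \log b(u)| \leq \eta$ for $u \geq U$, then set $u_0 = U$ and $u_{k+1} = u_k + s_G(u_k)$. Continuity and positivity of $s_G = (G'')^{-1/2}$ force $u_k \uparrow \infty$, and telescoping gives $|\log b(u_k) - \log b(u_0)| \leq k\eta$. A second-order Taylor expansion of $G$ on $[u_k, u_{k+1}]$, combined with the asymptotic relation $G''(\xi)/G''(u_k) \to 1$ from $G \in \ap$ and the identity $G''(u_k)\,s_G(u_k)^2 = 1$, yields
\begin{equation*}
G(u_{k+1}) - G(u_k) = G'(u_k)\,s_G(u_k) + \tfrac{1}{2} + o(1).
\end{equation*}
In the regime $G(u) \to \infty$ (the only one in which the claim has content), convexity forces $G'$ to be eventually nonnegative, so each increment is at least $\tfrac{1}{2} + o(1)$ and hence $G(u_k) \geq k/4$ for $k$ large. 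Combining the two bounds produces
\begin{equation*}
\limsup_{k\to\infty}\frac{|\log b(u_k)|}{G(u_k)} \leq 4\eta,
\end{equation*}
and sending $\eta \downarrow 0$ yields the desired limit along the sequence $(u_k)$. A final application of flatness on each interval $[u_k, u_{k+1}]$, together with $G(u) \geq G(u_k)$ from monotonicity, extends the conclusion to arbitrary $u \to \infty$.

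The main obstacle is the lower estimate on $G(u_k)$: one must ensure that the linear Taylor term $G'(u_k)\,s_G(u_k)$ does not swamp the $\tfrac{1}{2}$ contribution coming from the quadratic term. This is handled by combining the asymptotically parabolic control on $G''$ with convexity, which forces $G' \geq 0$ eventually once $G(u) \to \infty$; without this, one would need to disentangle the two contributions more carefully according to whether $G'$ tends to a finite positive limit or diverges.
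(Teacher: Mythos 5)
Your argument is correct in substance but follows a genuinely different route from the paper, which for part \ref{item-1:lem:self-neglecting-flat} invokes the representation theorem for self-neglecting functions ($s(u)=c(u)\int_0^u g$, with $c\to\gamma$ and $g\to 0$) and for part \ref{item-2:lem:self-neglecting-flat} replaces $b$ by a smooth asymptotic equivalent $\beta$ via \cite[Proposition 3.2]{balkema:1993} and then evaluates $\lim \log\beta(u)/G(u)$ by L'H\^opital together with \cite[Proposition 5.8]{balkema:1993} (which gives $s_G(u)G'(u)\to\infty$). Your proof is self-contained and more elementary: the pullback to a fixed base point $x_0$ in part \ref{item-1:lem:self-neglecting-flat} is a clean contradiction argument (the one point to be careful about is that the local uniform convergence is applied with $w=t_n$ varying with $n$; this is fine in the sequential reading of local uniformity, and the evaluation point $u_n+t_ns(u_n)=x_0$ stays safely inside the domain), and the chaining along $u_{k+1}=u_k+s_G(u_k)$ in part \ref{item-2:lem:self-neglecting-flat}, with the key identity $G''(u_k)s_G(u_k)^2=1$ turning the quadratic Taylor term into a definite increment of $\tfrac12+\littleo(1)$, is a nice way to avoid the smoothing machinery entirely. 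What the paper's approach buys is generality and brevity at the cost of two external references; what yours buys is transparency. The one genuine restriction in your version is the reduction to the regime $G(u)\to\infty$: there do exist $G\in\ap$ with $G$ eventually decreasing (e.g.\ $G''(u)=\log u/u^2$ gives $G(u)\sim -\tfrac12(\log u)^2$), for which your increment lower bound breaks down since $G'(u_k)s_G(u_k)$ is then negative and not negligible. This does not affect any use of the lemma in the paper (where $G$ is asymptotic to $\log\M_\nu$ or lies in $\apm$, hence tends to $+\infty$), and the paper's own L'H\^opital step likewise presupposes $|G(u)|\to\infty$, but if you want the lemma exactly as stated you should either add the hypothesis $G(u)\to\infty$ or handle the decreasing case separately (e.g.\ by running the same chaining argument on $-$increments, using that $s_G(u)|G'(u)|$ is bounded away from $0$ when $G\to-\infty$).
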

\begin{proof}
The representation theorem for self-neglecting functions, see~\cite[Theorem 2.11.3]{bingham:1989}, states that
\begin{equation}
\label{eq:s-n-representation}
s(u) = c(u)\int_0^u g(y)dy,
\end{equation}
where $c$ is measurable with $\lim\limits_{u \to \infty} c(u) = \gamma \in (0,\infty)$ and $g \in \mathrm{C}^\infty(\R)$  with $\lim\limits_{y\to \infty} g(y) = 0$. Now let $K > 0$ be fixed. From \eqref{eq:s-n-representation} we get
\begin{equation*}
\frac{s(u)}{uc(u)} =  \frac{1}{u}\int_0^u g(y) dy \leq \frac{K}{u} \sup_{y \in [0,K]} |g(y)| + \frac{u-K}{u} \sup_{y \in [K,u]} |g(y)|
\end{equation*}
and thus we deduce that
\begin{equation*}
\lim\limits_{u \to \infty} \frac{s(u)}{u} \leq \gamma \sup_{y \geq K} |g(y)|.
\end{equation*}
Since $K > 0$ was arbitrary, $s$ is positive and $\lim\limits_{y \to \infty} g(y) = 0$, the conclusion of \Cref{lem:self-neglecting-flat}\ref{item-1:lem:self-neglecting-flat} then follows. For the next claim, we invoke \cite[Proposition 3.2]{balkema:1993} to get that, on the one hand, there exists a $\mathrm{C}^\infty(\R)$-function $\beta$ such that $b(u) \stackrel{\infty}{\sim} \beta(u)$ and
\begin{equation*}
\lim\limits_{u \to \infty} \frac{s_G(u) \beta'(u)}{\beta(u)} = 0.
\end{equation*}
On the other hand, Proposition 5.8 from the aforementioned  paper yields
\begin{equation*}
\lim\limits_{u \to \infty} \frac{1}{s_G(u)G'(u)}=0.
\end{equation*}
Then, by L'Hospital's rule,
\begin{equation*}
\lim\limits_{u \to \infty} \frac{\log \beta(u)}{G(u)} = \lim\limits_{u \to \infty} \frac{\beta'(u)}{\beta(u)G'(u)} = \lim\limits_{u \to \infty} \frac{s_G(u)\beta'(u)}{\beta(u)} \frac{1}{s_G(u)G'(u)} = 0
\end{equation*}
and the claim follows from $b(u) \stackrel{\infty}{\sim} \beta(u)$ together with $\lim\limits_{u \to \infty} G(u) = \infty$, see \cite[Theorem 5.4]{balkema:1993}.
\end{proof}
For the next lemma we recall that the Legendre transform $G_*$ of a convex function $G$ is defined by
\begin{equation*}
G_*(x) = \sup_{u \in \R} \{xu-G(u)\}
\end{equation*}
for all $x \in \R$ such that the right-hand side is finite.
When $G$ is differentiable and  strictly convex, which is the case when $G \in \ap$, the supremum is attained at the unique point $u = \overleftarrow{G'}(x)$, where $\overleftarrow{G'}$ stands for the inverse of the continuous and increasing function $G'$, so that the Legendre transform is given by
\begin{equation}
\label{eq:derivative-Legendre}
G_*(x) = x\overleftarrow{G'}(x) - G(\overleftarrow{G'}(x)).
\end{equation}
The function $G_*$ is always convex. The Legendre transform is an involution on the space of convex functions, i.e.~for $G$ convex one has $({G_*})_* = G$. In the next lemma we prove another closure property regarding the Legendre transform, pertaining to the set $\apm$.

\begin{lemma}
\label{lem:admissible G closed under Legendre transform}
The set of admissible asymptotically parabolic functions is closed under Legendre transform, that is if $G_* \in \apm$ then $({G_*})_* = G \in \apm$. Consequently if $G_* \in \apm $ then~$\lim\limits_{x \to \infty} G_*'(x) = \lim\limits_{u \to \infty} G'(u) = \infty$.
\end{lemma}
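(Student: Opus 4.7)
The plan is to reduce everything to explicit computations with the formula $G(u) = u\gamma(u) - G_*(\gamma(u))$ from \eqref{eq:derivative-Legendre}, where $\gamma := (G_*')^{-1}$. First I would verify that $\gamma$ is well-defined for large $u$: because $G_*$ is convex with $G_*(x)/x \to \infty$, the derivative $G_*'(x)$ must tend to $\infty$ (else $G_*$ would grow at most linearly, contradicting admissibility), so $\gamma$ is well-defined, increasing, and tends to $\infty$. Routine differentiation then gives $G'(u) = \gamma(u)$ and $G''(u) = 1/G_*''(\gamma(u))$, so the scale function of $G$ is
\[
s_G(u) = \frac{1}{s_{G_*}(\gamma(u))}.
\]
At this point the "Consequently" clause is immediate: $G'(u) = \gamma(u) \to \infty$ by inversion, and $G_*'(x) \to \infty$ was just argued.

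The heart of the argument is verifying that $s_G$ is self-neglecting. I would fix $W > 0$, $w \in [-W,W]$, and set $x = \gamma(u)$, $y = \gamma(u + w s_G(u))$, so that by construction $G_*'(y) - G_*'(x) = w/s_{G_*}(x)$. Writing $z = z(u,w) := (y-x)/s_{G_*}(x)$ and applying the fundamental theorem of calculus converts this into the implicit equation
\[
w \;=\; \int_0^{z} \frac{G_*''(x + v\, s_{G_*}(x))}{G_*''(x)}\, dv.
\]
Now the self-neglecting property of $s_{G_*} = (G_*'')^{-1/2}$ translates directly into $G_*''$ being \emph{flat} with respect to $G_*$, i.e.\ the integrand tends to $1$ locally uniformly in $v$ as $x \to \infty$. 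Combining this with the strict monotonicity of both sides in $z$ forces $z(u,w) \to w$ locally uniformly in $w$ as $u \to \infty$. Applying the self-neglecting property of $s_{G_*}$ one more time, at the increment $z\, s_{G_*}(x)$, yields $s_{G_*}(y) \sim s_{G_*}(x)$ locally uniformly, whence
\[
\frac{s_G(u + w s_G(u))}{s_G(u)} \;=\; \frac{s_{G_*}(x)}{s_{G_*}(y)} \;\longrightarrow\; 1
\]
locally uniformly in $w$, proving $G \in \ap$.

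To finish with admissibility $G(u)/u \to \infty$, I would compute, with $x = \gamma(u)$ and $u = G_*'(x)$,
\[
\frac{G(u)}{u} \;=\; \frac{x G_*'(x) - G_*(x)}{G_*'(x)} \;=\; x - \frac{G_*(x)}{G_*'(x)},
\]
and then exploit the convexity bound $G_*(x) - G_*(x_0) \leq G_*'(x)(x - x_0)$, which gives $x - G_*(x)/G_*'(x) \geq x_0 - G_*(x_0)/G_*'(x)$ for every fixed $x_0$. Since $G_*'(x) \to \infty$, letting first $x \to \infty$ and then $x_0 \to \infty$ delivers $G(u)/u \to \infty$, so $G \in \apm$. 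The main obstacle is the middle step: the local uniformity in $w$ of $z(u,w) \to w$, since the self-neglectedness of $s_{G_*}$ only supplies uniformity on bounded $v$-ranges. One must therefore set the monotonicity argument up carefully so that $z(u,w)$ is itself trapped in a compact set depending only on $W$, after which the iteration $s_{G_*}(y) \sim s_{G_*}(x)$ is clean.
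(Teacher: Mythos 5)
Your argument is correct, but it follows a genuinely different route from the paper's. For the closure of $\ap$ under Legendre transform, the paper does not prove anything: it simply cites \cite{balkema:1993} for the fact that $(G_*)_*=G$ restricted to the image of $G_*'$ is asymptotically parabolic. You instead give a direct, self-contained verification via $s_G(u)=1/s_{G_*}(\gamma(u))$ and the implicit equation $w=\int_0^z G_*''\left(x+v\,s_{G_*}(x)\right)/G_*''(x)\,dv$; the trapping argument you flag (bounding $z$ by a constant depending only on $W$ using that the integrand is eventually $\geq \tfrac12$ on a fixed compact $v$-range, then feeding $|z|\leq 2W+2$ back into the self-neglecting property of $s_{G_*}$) does close the only delicate step, so this part is sound. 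For the admissibility of $G$, the two proofs diverge more sharply: the paper passes through the function $f(x)=e^{-G_*(x)}\mathbb{I}_{\{x>a\}}$, the Gaussian/very-thin-tail dichotomy, the asymptotic $\int e^{ux}f(x)\,dx \stackrel{\infty}{\sim}\sqrt{2\pi}\,e^{G(u)}/s_G(u)$ from \cite[Theorem C]{balkema:1995}, and the estimate $\log s_G(u)\stackrel{\infty}{=}\littleo(G(u))$ from \Cref{lem:self-neglecting-flat}, whereas you use only the identity $G(u)/u = x - G_*(x)/G_*'(x)$ with $x=\gamma(u)$ together with the convexity bound and $G_*'(x)\to\infty$. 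Your admissibility argument is strictly more elementary and arguably preferable in isolation; the paper's heavier route has the incidental benefit of reusing machinery (Gaussian tails, Theorem C) that is deployed anyway in \Cref{prop:analogue-Tauberian-theorem} and the proof of \Cref{thm:(in)determinacy-density}. The opening step ($G_*'\to\infty$ from convexity plus $G_*(x)/x\to\infty$) and the reading of the "consequently" clause coincide in both proofs.
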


\begin{proof}
Let $G_* \in \apm$. Since $G_*:(a,\infty) \to \R$ for some $a \in [-\infty,\infty)$ and $G_*$ is convex, we have, for all $x,y \in (a,\infty)$, the standard inequality
\begin{equation*}
G_*(x) - G_*(y) \leq G_*'(x)(x-y).
\end{equation*}
 Fixing $y$ and letting $x \to \infty$ we see that the admissibility property implies that $\lim\limits_{x\to\infty} G_*'(x) = \infty$. In \cite{balkema:1993} it was shown that the set of asymptotically parabolic functions is closed under Legendre transform, in the sense that the restriction of $({G_*})_* = G$ to the image of $(a,\infty)$ under $G_*'$ is asymptotically parabolic. Since, the image of $(a,\infty)$ under $G_*'$ is $(b,\infty)$ for some $b \in [- \infty,\infty)$, it follows that $G$ restricted to $(b,\infty)$ is asymptotically parabolic. Hence it remains to show that $G$ is admissible. To this end, we consider the function
\begin{equation*}
f(x) = e^{-G_*(x)}\mathbb{I}_{\{x > a\}}.
\end{equation*}
The admissibility of $G_*$ implies that, for any $n \geq 1$,
\begin{equation*}
\lim_{x \to \infty} f(x)e^{nx} = \lim_{x \to \infty} e^{nx-G_*(x)}\mathbb{I}_{\{x > 0\}} = 0
\end{equation*}
i.e.~that $f$ has a Gaussian tail in the sense of \cite{balkema:1995}. This in turn yields that, for any $n \geq 1$,
\begin{equation*}
\lim\limits_{u \to \infty} e^{nu}(1-F(u)) = \lim\limits_{u \to \infty} e^{nu}\int_u^\infty f(x)dx = 0
\end{equation*}
which is equivalent to
\begin{equation}
\label{eq:1-F thin tail}
\lim\limits_{u \to \infty} e^{-nu}\int_{-\infty}^\infty e^{ux}f(x)dx = \infty,
\end{equation}
see e.g.~the discussion after \cite[Theorem C]{balkema:1995}. However, the Gaussian tail property of $f$ allows us to invoke the same result to conclude that
\begin{equation*}
\int_{-\infty}^\infty e^{ux}f(x)dx \stackrel{\infty}{\sim} \frac{\sqrt{2\pi}}{s_{G}(u)}e^{G(u)}.
\end{equation*}
This asymptotic, combined with $\log s_{G}(u) \stackrel{\infty}{=} \littleo(G(u))$ from \Cref{lem:self-neglecting-flat}\ref{item-2:lem:self-neglecting-flat} and \eqref{eq:1-F thin tail}, allows us to conclude that $G$ is admissible, from which $ \lim\limits_{u \to \infty} G'(u) = \infty$ follows as before.
\end{proof}

%
%
%

In the  following  we provide a Tauberian result on the moment transform which is an analogue to the one obtained for the bi-lateral Laplace transform by Feigin and Yaschin, see~\cite[Theorem 3]{feigin:1983}.
\begin{proposition}
\label{prop:analogue-Tauberian-theorem}
Let $\M_\nu$ be the Stieltjes moment sequence of a positive measure $\nu$, and suppose that the conditions in \Cref{thm:(in)determinacy-moment-sequence} are satisfied. Then, $\nu$ is absolutely continuous with respect to the Lebesgue measure and its density $\nu(dx) = \nu(x)dx$, $x > 0$, satisfies
\begin{equation*}
\nu(x) \stackrel{\infty}{\sim} \frac{1}{\sqrt{2\pi}}\frac{e^{-G_*(\log x)}}{ x s_{G_*}(\log x)}  ,
\end{equation*}
where $G_*$ is the Legendre transform of $G$ and $s_{G_*}$ is its scale function. Furthermore, $G_* \in \apm$.
\end{proposition}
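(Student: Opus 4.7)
The strategy is to reduce the proposition to a Tauberian theorem for bi-lateral Laplace transforms of the type developed in \cite{feigin:1983} and refined in \cite{balkema:1995}, and then to transfer the resulting density asymptotic back to $\nu$ via the exponential change of variables $x = e^y$.

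\textbf{Step 1 (reduction to the Laplace setting).} Let $\mu$ denote the pushforward of $\nu$ under $x \mapsto \log x$, so that $\mu$ is a measure on $\R$ whose bi-lateral Laplace transform coincides with the moment transform, namely
\[
\widehat{\mu}(z) = \int_\R e^{zy}\mu(dy) = \int_0^\infty x^z \nu(dx) = \M_\nu(z),
\]
valid on the strip where $\M_\nu$ is analytic, which contains $\Re z \geq 0$ because all positive moments are finite. \textbf{Step 2 (asymptotic interpolation).} The next task is to upgrade the integer asymptotic $\M_\nu(n) \stackrel{\infty}{\sim} e^{G(n)}$ to the continuous asymptotic $\widehat{\mu}(u) \stackrel{\infty}{\sim} e^{G(u)}$ as $u \to \infty$ along $\R$. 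This combines the convexity (hence monotonicity-of-differences property) of $\log \widehat{\mu}$ with the self-neglecting and flat character of $s_G$: since $s_G(u) \stackrel{\infty}{=} \littleo(u)$ by \Cref{lem:self-neglecting-flat}\ref{item-1:lem:self-neglecting-flat}, the smooth envelope $e^{G(u)}$ squeezes the sequence asymptotically between its consecutive integer values.

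\textbf{Step 3 (saddle-point inversion).} On the vertical contour $\Re z = u$, the Mellin inversion formula reads
\[
\mu(y) = \frac{1}{2\pi}\int_\R \widehat{\mu}(u+it)e^{-(u+it)y}\,dt,
\]
provided the integrand belongs to $\Leb^1(\R)$. Condition \ref{item-b:thm:(in)determinacy-moment-sequence} of \Cref{thm:(in)determinacy-moment-sequence}, rescaled by $\eta(n) = 1/s_G(n)$, furnishes precisely the uniform $\Leb^1$-domination required to apply the saddle-point method, with saddle at $u = G'^{-1}(y)$. Standard Gaussian asymptotics around the saddle, together with the flatness of $s_G$ and $\eta$ along translates by $s_G(u)$, give
\[
\widehat{\mu}(u)e^{-uy} \stackrel{\infty}{\sim} e^{G(u) - uy} = e^{-G_*(y)} \quad \text{with } u = G'^{-1}(y),
\]
and the integral over $t$ contributes the Gaussian factor $\sqrt{2\pi}/\eta(u) = \sqrt{2\pi}\,s_{G_*}(y)$ after expressing the curvature at the saddle in terms of $G_*$. \textbf{Step 4 (Tauberian conclusion for $\mu$).} Assembling Steps~2 and~3 yields, by the Tauberian theorem of \cite{feigin:1983,balkema:1995}, that $\mu(dy) = \mu(y)dy$ with
\[
\mu(y) \stackrel{\infty}{\sim} \frac{e^{-G_*(y)}}{\sqrt{2\pi}\, s_{G_*}(y)}.
\]

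\textbf{Step 5 (transfer to $\nu$).} The change of variables $y = \log x$ gives $\nu(dx) = \nu(x)dx$ on $(0,\infty)$ with $\nu(x) = \mu(\log x)/x$, yielding the stated asymptotic
\[
\nu(x) \stackrel{\infty}{\sim} \frac{1}{\sqrt{2\pi}}\,\frac{e^{-G_*(\log x)}}{x\,s_{G_*}(\log x)}.
\]
\textbf{Step 6 (admissibility of $G_*$).} By the closure of $\ap$ under Legendre transform established in \cite{balkema:1993} and used in the proof of \Cref{lem:admissible G closed under Legendre transform}, $G_* \in \ap$. Admissibility $G_*(x)/x \to \infty$ then follows from the finiteness of all positive moments of $\nu$: this forces $\widehat{\mu}$ to be finite on all of $[0,\infty)$, hence the right tail of $\mu$ to decay super-exponentially, which via the density identified in Step~4 is equivalent to $G_*(x)/x \to \infty$. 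Therefore $G_* \in \apm$.

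\textbf{Main obstacle.} The principal technical difficulty lies in Step~3: one must reconcile the discrete, integer-indexed version of the uniform $\Leb^1$-domination in \ref{item-b:thm:(in)determinacy-moment-sequence} with the continuous hypothesis needed for the Tauberian theorem, and then execute the saddle-point expansion uniformly as the saddle $u = G'^{-1}(y)$ varies. Both issues hinge on the flatness of $s_G$ and $\eta$ with respect to $G$, exploited through \Cref{lem:self-neglecting-flat}.
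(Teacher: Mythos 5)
Your Steps 1, 5 and 6 coincide with the paper's proof: the pushforward of $\nu$ to $\mu$ on the log-scale so that $\F_\mu=\M_\nu$, the change of variables back at the end, and admissibility of $G_*$ via the very-thin-tail property together with \Cref{lem:admissible G closed under Legendre transform}. The divergence, and the gap, is in Steps 2--4, where you replace the paper's use of the Esscher-transform machinery of \cite{balkema:1995} by a bare-hands saddle-point inversion. Two concrete problems. First, Step 2 fails as stated: for $u\in[n,n+1]$, monotonicity and convexity pin $\log\F_\mu(u)$ between $\log\M_\nu(n)$ and a convex interpolant of $\log\M_\nu(n)$ and $\log\M_\nu(n+1)$, but $G(n+1)-G(n)\approx G'(n)\to\infty$, so this squeeze only controls $\log\F_\mu(u)-G(u)$ up to an error of order $G'(n)$, not $\littleo(1)$; the continuous asymptotic $\F_\mu(u)\stackrel{\infty}{\sim}e^{G(u)}$ does not follow from it. (The paper never needs this interpolation: the whole argument is run along the integers.) Second, and more seriously, the engine of any such proof is the identification $1/\eta(n)\stackrel{\infty}{\sim}s_G(n)$, i.e.\ that the hypothesis $\M_\nu(n)\stackrel{\infty}{\sim}e^{G(n)}$ --- an asymptotic at the level of the functions --- transfers to the second derivative $\eta^2(n)=(\log\M_\nu(n))''$. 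This is what makes the curvature at the saddle computable and produces the factor $\sqrt{2\pi}\,s_{G_*}(y)$; it is \emph{not} automatic (asymptotic equivalence of convex functions does not in general pass to derivatives), and in the paper it is exactly the point where \cite[Theorem A]{balkema:1995} is invoked. Your Step 3 assumes it silently.

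Beyond that, the ``standard Gaussian asymptotics around the saddle, executed uniformly as the saddle varies'' to which you defer is precisely the content of the results that would have to be proved. The paper's route is: write the normalized Esscher transform $\F_{\mathcal{E}_\mu(n)}$, use Taylor's theorem with Lagrange remainder and the self-neglecting property of $1/\eta$ to get $\F_{\mathcal{E}_\mu(n)}(iy)\to e^{-y^2/2}$ locally uniformly, upgrade this to convergence in $\Leb^1(\R)$ using the domination hypothesis \Cref{thm:(in)determinacy-moment-sequence}\ref{item-b:thm:(in)determinacy-moment-sequence} and dominated convergence, and then cite \cite[Theorems 5.1 and 4.4]{balkema:1995} for the existence of the density of $\mu$ and its Gaussian-tail asymptotic. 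A self-contained saddle-point argument would amount to reproving those two theorems, including the delicate step of showing that the local expansions indexed by integer $n$ cover all large $y$ even when $s_G(n)\to 0$. You correctly identify this discrete-versus-continuous reconciliation as the main obstacle, but naming the obstacle does not discharge it; as written the proposal identifies the right objects (the saddle $u=G'^{-1}(y)$, the Legendre transform in the exponent, the scale factor) without closing the argument.
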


\begin{proof}
Let $\mu$ be the pushforward of the measure $\nu$ under the map $x \mapsto \log x$, meaning that $\mu(dy)=\nu(e^y)e^ydy,y\in \R,$ when $\nu$ is absolutely continuous with a density denoted again by $\nu$. It is immediate that $\mu$ is a probability measure with $\supp(\mu) = \R$, and for $u \geq 0$,
\begin{equation*}
\F_\mu(u) = \int_{-\infty}^\infty e^{uy}\mu(dy) = \int_0^\infty x^u \nu(dx) = \M_\nu(u),
\end{equation*}
where the left-hand equality sets a notation. Since $\nu$ admits all positive moments we get that $\F_\mu(u) < \infty$ for all $u \geq 0$. Let $k \geq 1$ be fixed and choose $M$ large enough such that $\log M > k$. Then,
\begin{equation*}
\M_\nu(n) = \int_0^\infty x^n \nu(dx) \geq \int_M^\infty x^n \nu(dx) \geq \overline{\nu}(M) M^n
\end{equation*}
and $\overline{\nu}(M) > 0$ by assumption on the support of $\nu$. By the choice of $M$,
\begin{equation*}
\lim_{n\to\infty}\M_\nu(n)e^{-kn} \geq \lim_{n\to\infty}\overline{\nu}(M) e^{n(\log M - k)} = \infty,
\end{equation*}
from which we conclude that $\lim\limits_{u \to \infty} \F_\mu(u)e^{-ku} = \infty$. Let us write $F$ for the cumulative distribution function of $\mu$. Then the properties $\F_\mu(u) < \infty$, for all $u \geq 0$, and $\lim\limits_{u \to \infty} \F_\mu(u)e^{-ku} = \infty$ for any $k \geq 1$, are equivalent to $F$ having a very thin tail in the sense of \cite{balkema:1995}. Note that, since $\M_\nu(n) \stackrel{\infty}{\sim} e^{G(n)}$, the property of $F$ having a very thin tail  which by \Cref{lem:admissible G closed under Legendre transform} and its proof  implies that both $G, G_* \in \apm$.
Next, for $n \geq 0$, we recall that the Esscher transform of $\mu$ is the probability measure whose cumulative distribution function is given by $\int_{-\infty}^t e^{nx} \mu(dx) / \int_{-\infty}^\infty e^{nx} \mu(dx)$, which is well-defined thanks to the fact that $F$, its distribution, has a very thin tail. Write $\mathcal{E}_\mu(n)$ for the normalized Esscher transform of $\mu$, which means that its bi-lateral Laplace transform takes the form
\begin{equation}
\label{eq:F-form-1}
\F_{\mathcal{E}_\mu(n)}(u) = \frac{\M_\nu\left(n+\tfrac{u}{\eta(n)}\right)}{\M_\nu(n)}\exp\left(-\frac{\M'_\nu(n)}{\M_\nu(n)}u\right).
\end{equation}
By applying Taylor's theorem with the Lagrange form of the remainder to the right-hand side we get
\begin{equation}
\label{eq:F-form-2}
\F_{\mathcal{E}_\mu(n)}(u)  = \exp\left(\frac{\eta^2\left(n+\frac{\Theta(u,n) u}{\eta(n)}\right)}{\eta^2(n)} \frac{u^2}{2}\right),
\end{equation}
where $\eta^2(n) = (\log \M_\nu(n))''$ and the mapping $\Theta$  is such that $|\Theta(u,n)| \leq 1$ for all $u \geq 0$ and $n \in \N$. Now, the fact that $\lim\limits_{n \to \infty} (\log \M_\nu(n)-G(n)) = 0$ allows us to use \cite[Theorem A]{balkema:1995} to conclude that $\frac{1}{\eta(n)} \stackrel{\infty}{\sim} s_G(n)$, where $s_G$ is the scale-function of $G$. By assumption $s_G$ is self-neglecting, and the self-neglecting property is closed under asymptotic equivalence, so we get that $1/\eta$ is self-neglecting. From \eqref{eq:F-form-2} it then follows that $\lim\limits_{n \to \infty}\F_{\mathcal{E}_\mu(n)}(u) = e^{u^2/2}$, where the convergence is uniform on bounded $u$-intervals. Since the convergence of the bi-lateral Laplace transform yields the convergence in distribution, we then also get that $\lim\limits_{n \to \infty}\F_{\mathcal{E}_\mu(n)}(iy) = e^{-y^2/2}$ uniformly on bounded $y$-intervals. However, note that substituting $iy$ for $u$ in \eqref{eq:F-form-1} gives
\begin{equation*}
\left|\F_{\mathcal{E}_\mu(n)}(iy)\right| = \left|\frac{\M_\nu\left(n+i\tfrac{y}{\eta(n)}\right)}{\M_\nu(n)}\right|.
\end{equation*}
Therefore, the assumption in \eqref{eq:analytic condition on Mellin transform} of \Cref{thm:(in)determinacy-moment-sequence} is that, for all $n \geq n_0$, we have $|\F_{\mathcal{E}_\mu(n)}(iy)| \leq h(y)$, for some $h \in \Leb^1(\R)$ and uniformly in $n$. By the dominated convergence theorem, we get the stronger convergence property $\lim\limits_{n \to \infty} ||\F_{\mathcal{E}_\mu(n)}(iy) - e^{-y^2/2}||_{\Leb^1(\R)} = 0$. This allows us to invoke \cite[Theorem 5.1]{balkema:1995}, from which we conclude that $\mu(dy) = \mu(y)dy$, $y\in \R,$ and that the continuous density $\mu$ has a Gaussian tail.  Then, Theorem 4.4 in the aforementioned paper allows us to identify the asymptotic behavior of $\mu$ as
\begin{equation*}
\mu(y) \stackrel{\infty}{\sim} \frac{1}{\sqrt{2\pi}}\frac{e^{-G_*(y)}}{s_{G_*}(y)} ,
\end{equation*}
where $G_*$ is the Legendre transform of $G$ and $s_{G_*}$ is its scale function. By changing variables it follows that $\nu(dx) = \nu(x)dx,x>0,$ and
\begin{equation*}
\nu(x) \stackrel{\infty}{\sim} \frac{1}{\sqrt{2\pi}}\frac{e^{-G_*(\log x)}}{xs_{G_*}(\log x)}
\end{equation*}
which completes the proof.
\end{proof}

\subsection{Proof of \Cref{thm:(in)determinacy-moment-sequence}} \label{subsec:proof-thm:(in)determinacy-moment-sequence}

First we use \Cref{prop:analogue-Tauberian-theorem} to get that $\nu(dx) = \nu(x)dx,x>0$, with
\begin{equation}
\label{eq:nu-asymptotic}
\nu(x) \stackrel{\infty}{\sim} \frac{1}{\sqrt{2\pi}}\frac{e^{-G_*(\log x)}}{xs_{G_*}(\log x)},
\end{equation}
where $G_* \in \apm$ is the Legendre transform of $G$ and $s_{G_*}$ is its scale function. To prove the indeterminacy of $\nu$ we aim to apply a refinement of Krein's theorem due to Pedersen \cite{pedersen:1998}, which amounts to showing that there exists $x_0 \geq 0$ such that
\begin{equation*}
\int_{x_0}^\infty \frac{-\log \nu(x^2)}{1+x^2} dx < \infty.
\end{equation*}
To this end, we recall that, by definition, $G_* \in \apm$ implies that $\lim\limits_{x \to \infty} G_*(x) = \infty$ and also that $\log B(x) \stackrel{\infty}{=} \littleo(G(x))$, which follows from \Cref{lem:self-neglecting-flat}\ref{item-2:lem:self-neglecting-flat}. These two facts mean that the asymptotic relation in \eqref{eq:nu-asymptotic} holds upon taking logarithms, so that one gets
\begin{equation*}
\log \nu(x) \stackrel{\infty}{\sim} -G_*(\log x) - \log s_{G_*}(\log x) - \log x - \log \sqrt{2\pi}.
\end{equation*}
Hence it suffices to show that
\begin{equation}
\label{eq:integral-shown}
\int_{x_0}^\infty \left(\frac{2\log x}{1+x^2} + \frac{\log s_{G_*}(\log x)}{1+x^2} + \frac{G_*(2\log x)}{1+x^2}\right) dx < \infty
\end{equation}
for $x_0$ chosen such that $G_*$ (and thus $s_{G_*}$) are well defined on $(x_0,\infty)$. Without loss of generality, we also suppose that $x_0$ is chosen large enough such that $G_*(\log x) > 0$ for all $x \geq x_0$, which is possible since $\lim\limits_{x \to \infty} G_*(\log x) = \infty$. Now, the integral of the first term in \eqref{eq:integral-shown} is plainly finite for any $x_0 \geq 0$. From \Cref{lem:self-neglecting-flat}\ref{item-1:lem:self-neglecting-flat} we have that $s_{G_*}(x) \stackrel{\infty}{=} \littleo(x)$ and therefore the integral of the second term in \eqref{eq:integral-shown} is also finite. Consequently it remains to bound the integral of the last term, for which, after performing a change of variables, we obtain
\begin{equation*}
\frac{1}{2}\int_{y_0}^\infty G_*(y)e^{-\frac{y}{2}} \frac{e^y}{1+e^y} dy \leq \int_{y_0}^\infty G_*(y) e^{-\frac{y}{2}} dy,
\end{equation*}
where $y_0 = 2\log x_0$. Then, using \eqref{eq:derivative-Legendre} and making
 another change of variables gives that
\begin{equation}
\label{eq:Krein-type integral}
\int_{y_0}^{\infty} G_*(y) e^{-\frac{y}{2}} dy = \int_{u_0}^{\infty} (uG'(u)-G(u)) G''(u) e^{-\frac{G'(u)}{2}} du,
\end{equation}
where $u_0 = \overleftarrow{G'}(y_0)$. The assumption in \Cref{thm:(in)determinacy-moment-sequence}\ref{item-1:thm:(in)determinacy-moment-sequence} is that the integral on the right is finite for some $x_0$ (and thus $u_0$) large enough, and thus we conclude that $\nu$ is moment indeterminate, which completes the proof of \Cref{item-1:thm:(in)determinacy-moment-sequence}.
For the proof of the next claim, we suppose that $G$ is defined on $(a,\infty)$ for some $a \geq - \infty$. Then, from the assumption in \Cref{item-a:thm:(in)determinacy-moment-sequence} and  the fact that $\frac{f^{-1/x}(x)}{g^{-1/x}(x)}=\left(\frac{f(x)}{g(x)}\right)^{-1/x}$ we get, for $A>a$ large enough, that
\begin{equation*}
\sum_{\max(1,\lceil A \rceil)}^\infty \M_\nu^{-\frac{1}{2n}}(n) \geq C_1 \sum_{\max(1,\lceil A \rceil)}^\infty e^{-\frac{G(n)}{2n}}
\end{equation*}
for some constant $C_1 > 0$. Since $G$ is convex and differentiable it satisfies, for all $n,r \in (A,\infty)$, the inequality
\begin{equation*}
G(n) - G(r) \leq G'(n)(n-r).
\end{equation*}
Let $r>0$ be fixed such that $G(r) > 0$, noting that it is possible to find such an $r$ by the fact that $G = (G_*)_* \in \apm$ and hence $\lim\limits_{u \to \infty} G(u) = \infty$, see \Cref{lem:admissible G closed under Legendre transform}. Using this fact and that $G\in \mathrm{C}_+^2((a,\infty))$, we get, from the above inequality, that
\begin{equation*}
\sum_{\max(1,\lceil A \rceil)}^\infty e^{-\frac{G(n)}{2n}} \geq \sum_{\max(1,\lceil A \rceil)}^\infty e^{\frac{G'(n)r - G(r)}{2n}}e^{-\frac{G'(n)}{2}} \geq C_2 \sum_{\max(1,\lceil A \rceil)}^\infty e^{-\frac{G(r)}{2n}} e^{-\frac{G'(n)}{2}} \geq C_3 \sum_{\max(1,\lceil A \rceil)}^\infty e^{-\frac{G'(n)}{2}},
\end{equation*}
where $C_2,C_3 >0 $ are constants. Putting all of these observations together gives that
\begin{equation}
\label{eq:sum-G'}
\sum_{\max(1,\lceil A \rceil)}^\infty \M_\nu^{-\frac{1}{2n}}(n) \geq C \sum_{\max(1,\lceil A \rceil)}^\infty e^{-\frac{G'(n)}{2}}
\end{equation}
for some constant $C > 0$. We wish to compare the sum in the right-hand side of \eqref{eq:sum-G'} with the integral in \eqref{eq:Krein-type integral}. To this end we integrate the right-hand side of \eqref{eq:Krein-type integral} by parts to obtain
\begin{equation}
\label{eq:IBP-1}
\int_{u_0}^{\infty} (uG'(u)-G(u)) G''(u) e^{-\frac{G'(u)}{2}} du = -2(uG'(u)-G(u))e^{-\frac{G'(u)}{2}} \Big|_{u_0}^{\infty} + 2 \int_{u_0}^\infty uG''(u)e^{-\frac{G'(u)}{2}} du.
\end{equation}
From the assumption that $\lim\limits_{u \to \infty} ue^{-\frac{G'(u)}{2}} < \infty$ an application of L'H\^{o}pital's rule allows us to conclude that
\begin{equation*}
\label{eq:limit-calculation}
\lim\limits_{u \to \infty} \frac{uG'(u)-G(u)}{e^\frac{G'(u)}{2}} = 2\lim\limits_{u \to \infty} \frac{u}{e^\frac{G'(u)}{2}} < \infty
\end{equation*}
and hence the boundary term in \eqref{eq:IBP-1} is a finite constant, say $b_1$. Integrating by parts again the right-hand integral in \eqref{eq:IBP-1} we obtain
\begin{equation}
\label{eq:IBP-2}
b_1 + 2 \int_{u_0}^\infty uG''(u)e^{-\frac{G'(u)}{2}} du = b_1 - 4ue^{-\frac{G'(u)}{2}}\Big|_{u_0}^\infty + 4\int_{u_0}^\infty e^{-\frac{G'(u)}{2}} du.
\end{equation}
The limit at infinity for the boundary term in \eqref{eq:IBP-2} above can be controlled by the assumption that $\lim\limits_{u \to \infty} ue^{-\frac{G'(u)}{2}} < \infty$ and hence this boundary term, say $b_2$, is also finite. Thus, we get
\begin{equation*}
\int_{u_0}^{\infty} (uG'(u)-G(u)) G''(u) e^{-\frac{G'(u)}{2}} du = b_1 + b_2 + 4\int_{u_0}^\infty e^{-\frac{G'(u)}{2}} du.
\end{equation*}
Now, since $G'$ is non-decreasing we have, taking $u_0 \geq \max(1,\lceil A \rceil)$,
\begin{equation*}
\int_{u_0}^\infty e^{-\frac{G'(u)}{2}} du \leq \sum_{n=\lfloor u_0 \rfloor}^\infty e^{-\frac{G'(n)}{2}},
\end{equation*}
so that finally we establish the inequalities
\begin{equation}
\label{eq:three-inequalities}
\int^\infty(uG'(u)-G(u)) G''(u) e^{-\frac{G'(u)}{2}} du = b_1 + b_2 + 4\int^\infty e^{-\frac{G'(u)}{2}} du \leq b_1+b_2 + \frac{4}{C} \sum^\infty \M_\nu^{-\frac{1}{2n}}(n),
\end{equation}
where $b_1,b_2 \in \R$ and $C > 0$ are finite constants. Hence, if the Carleman's sum on the right-hand side of \eqref{eq:three-inequalities} is finite we conclude that $\int^\infty e^{-\frac{G'(u)}{2}}du < \infty$, which implies that $\int^\infty (uG'(u)-G(u)) G''(u) e^{-\frac{G'(u)}{2}} du < \infty$, which in turn yields the moment indeterminacy of $\nu$. Conversely, if the integral on the left-hand side of \eqref{eq:three-inequalities} is infinite then $\int^\infty e^{-\frac{G'(u)}{2}}du = \infty$, which forces Carleman's sum to be infinite, thus yielding the moment determinacy of $\nu$. This completes the proof of \Cref{item-2:thm:(in)determinacy-moment-sequence} and hence of the theorem.

%
%

\subsection{Proof of \Cref{thm:(in)determinacy-density}} \label{subsec:proof-thm:(in)determinacy-density}


We first note that it suffices to prove \Cref{thm:(in)determinacy-density}\ref{item-1:thm:(in)determinacy-density} under the assumption that $\nu(dx) = \nu(x)dx$ such that
\begin{equation*}
\nu(x) \stackrel{\infty}{=} \bigO(e^{-G_*(\log x)}).
\end{equation*}
This is because establishing the result in this case allows us to apply it to the probability density
\begin{equation*}
x \mapsto \frac{\overline{\nu}(x)}{\M_\nu(1)},
\end{equation*}
which is the density of the so-called stationary-excess distribution of $\nu$ (of order 1), and whose moment determinacy implies the moment determinacy of $\nu$, see~\cite[Section 2]{berg:2004a}. Hence, we suppose that there exist constants $A', C' > 0$ such that, for  all  $x \geq A'$,
\begin{equation*}
\nu(x) \leq C' e^{-G_*(\log x)}.
\end{equation*}
 Without loss of generality we can replace $G_*(x)$ by $G_*(x) - x$, since the addition of linear functions does not affect the asymptotically parabolic property or the other conditions of the theorem. Hence we may assume that there exist constants $A, C > 0$ such that,  for  all  $x \geq A$,
\begin{equation} \label{eq:bound_nu}
\nu(x) \leq C e^{-G_*(\log x)-\log x}.
\end{equation}
 Set, for $n \geq 0$,
\begin{equation*}
\mathfrak{s}(n) = \int_A^\infty x^n e^{-G_*(\log x)-\log x} dx.
\end{equation*}
By a change of variables
\begin{equation}
\label{eq:change-of-vars-s}
\mathfrak{s}(n) = \int_{\log A}^\infty e^{ny-G_*(y)}dy
\end{equation}
and since $G_* \in \apm$ the right-hand side is finite for all $n \geq 0$. This combined with \eqref{eq:bound_nu} allows us to obtain the bound, for $n \geq 0$,
\begin{equation*}
\M_\nu(n) = \int_0^\infty x^n \nu(x) dx \leq \int_0^A x^n \nu(x)dx + C\int_A^{\infty} x^n e^{-G_*(\log x) - \log x} dx \leq A^n + C\mathfrak{s}(n).
\end{equation*}
 Since $\supp(\nu) = [0,\infty)$ it is straightforward that $A^n \stackrel{\infty}{=} \littleo(\M_\nu(n))$ and hence, for $n$ large enough,
\begin{equation*}
C\mathfrak{s}(n) \geq \M_\nu(n) - A^n = \M_\nu(n)\left(1-\frac{A^n}{\M_\nu(n)}\right)  \geq c \M_\nu(n),
\end{equation*}
where $c \in (0,1)$ is a constant. Therefore to prove moment determinacy it suffices to show the divergence of the sum $\sum^\infty \mathfrak{s}^{-\frac{1}{2n}}(n)$ for a suitable lower index, since this would imply the divergence of Carleman's sum $\sum^\infty \M_\nu^{-\frac{1}{2n}}(n)$. To this end, we note that $G_* \in \apm$ implies that the function
\begin{equation*}
f(y) = e^{-G_*(y)}\mathbb{I}_{\{y > \log A\}}
\end{equation*}
satisfies the conditions of \cite[Theorem B]{balkema:1995}, see e.g.~the proof of \Cref{prop:analogue-Tauberian-theorem}. The expression for $\mathfrak{s}$ given in \eqref{eq:change-of-vars-s} allows us to invoke this quoted result to conclude that
\begin{equation*}
\mathfrak{s}(n) \stackrel{\infty}{\sim} \frac{\sqrt{2\pi}}{s_{G}(n)}e^{G(n)},
\end{equation*}
where $(G_*)_* = G$ is the Legendre transform of $G_*$ and $s_{G}$ is its scale function. Hence, choosing $a$ large enough with $G$ well-defined on $(a,\infty)$ and say $\sqrt{2\pi}\leq e^{2\lceil a \rceil}$, we have
\begin{equation*}
\sum_{\max(\lceil a \rceil, 1)}^\infty \mathfrak{s}^{-\frac{1}{2n}}(n) \geq C_1 \sum_{\max(\lceil a \rceil, 1)}^\infty s_{G}(n)^{\frac{1}{2n}} e^{-\frac{G(n)}{2n}}
\end{equation*}
for some constant $C_1 > 0$. Now, from \eqref{eq:derivative-Legendre} we have the relation $G(G_*'(n)) = nG_*'(n) - G_*(n)$ and differentiating this relation twice gives that $s_G(G_*'(n)) = 1 / s_{G_*}(n)$. Using these facts and recalling that $\overleftarrow{G_*'}$ denotes the inverse of $G_*'$, we deduce that
\begin{equation*}
\sum_{\max(\lceil a \rceil, 1)}^\infty s_{G}(n)^{\frac{1}{2n}} e^{-\frac{G(n)}{2n}} = \sum_{\max(\lceil a \rceil, 1)}^\infty s_{G_*}(\overleftarrow{G_*'}(n))^{-\frac{1}{2n}} e^{\frac{G_*(\overleftarrow{G_*'}(n))}{2n}} e^{-\frac{\overleftarrow{G_*'}(n)}{2}}.
\end{equation*}
Next, we recall from \Cref{lem:self-neglecting-flat}\ref{item-2:lem:self-neglecting-flat} that $ \log s_{G_*}(n) \stackrel{\infty}{=} \littleo(G_*(n))$ and note that $\overleftarrow{G_*'}$ is an increasing function satisfying $\lim\limits_{n \to \infty} \overleftarrow{G_*'}(n) = \infty$. Thus, given a constant $C_2 \in (0,1)$ it follows that there exists $N \in \N$ such that, for $n \geq N$,
\begin{equation*}
G_*(\overleftarrow{G_*'}(n))\left(1-\frac{\log s_{G_*}(\overleftarrow{G_*'}(n))}{G_*(\overleftarrow{G_*'}(n))}\right) \geq C_2 G_*(\overleftarrow{G_*'}(n)).
\end{equation*}
As $G_* \in \apm$, we get that, for some constant $C_3 > 0$,
\begin{equation*}
\sum_{\max(\lceil a \rceil, 1)}^\infty s_{G_*}(\overleftarrow{G_*'}(n))^{-\frac{1}{2n}} e^{\frac{G_*(\overleftarrow{G_*'}(n))}{2n}} e^{-\frac{\overleftarrow{G_*'}(n)}{2}} \geq C_3 \sum_{\max(\lceil a \rceil, 1)}^\infty e^{-\frac{\overleftarrow{G_*'}(n)}{2}}.
\end{equation*}
Putting all of these observations together gives us the inequality
\begin{equation*}
\sum_{\max(\lceil a \rceil, 1)}^\infty \mathfrak{s}(n)^{-\frac{1}{2n}} \geq \tilde{C} \sum_{\max(\lceil a \rceil, 1)}^\infty e^{-\frac{\overleftarrow{G_*'}(n)}{2}}
\end{equation*}
for some constant $\tilde{C} > 0$ depending only on $A$. If the sum on the right-hand side diverges, which is the condition of \Cref{item-1:thm:(in)determinacy-density}, it follows that $\nu$ is moment determinate, which concludes the proof in this case.
Next, for the proof of \Cref{item-2:thm:(in)determinacy-density}, we again assume, without loss of generality, that,  for $x \geq A$,
\begin{equation*}
\frac{1}{C} e^{-G_*(\log x)-\log x} \leq \nu(x) \leq C e^{-G_*(\log x)-\log x}
\end{equation*}
for some constants $C,A > 0$ (which may be different from the ones above). Then, for another constant $K > 0$,
\begin{align*}
\int_A^\infty \frac{-\log \nu(x^2)}{1+x^2}dx &\leq \int_A^\infty \frac{G_*(2\log x)+\log(Cx^2)}{1+x^2}dx \\ &\leq  \int_A^\infty \frac{G_*(2\log x)}{1+x^2}dx +K =  \int_{2\log A}^\infty G_*(y)\frac{e^{\frac{y}{2}}}{2(1+e^y)} dy +K\\
&\leq  \frac{1}{2(1+A^{-2})} \int_{2\log A}^\infty G_*(y)e^{-\frac{y}{2}}dy +K.
\end{align*}
Thus it suffices to show that if the right-most integral is infinite then $\nu$ is moment determinate. To this end, we wish to perform a similar integration by parts calculation as in the proof of \Cref{thm:(in)determinacy-moment-sequence}, which requires us to have
\begin{equation*}
\lim\limits_{u \to \infty} ue^{-\frac{{G}'(u)}{2}} < \infty.
\end{equation*}
As noted earlier, \eqref{eq:derivative-Legendre} yields $G(G_*'(x)) = xG_*'(x) - G_*(x)$ and by differentiating this relation one gets that $G_*'$ and $G'$ are inverses of each other. Hence,
\begin{equation*}
\lim\limits_{u \to \infty} ue^{-\frac{{G}'(u)}{2}} < \infty \iff \lim_{x \to \infty} G_*'(x)e^{-\frac{x}{2}} < \infty
\end{equation*}
and the rest of the proof proceeds as in the proof of \Cref{thm:(in)determinacy-moment-sequence}.

\subsection{Proof of \Cref{cor:generalized-Hardy}} \label{subsec:proof-cor:generalized-Hardy}

By assumption we have that, for some $c > 0$, $\int_0^\infty e^{c\sqrt{x}} \nu(dx) < \infty$. Write $\rho(x) = \int_0^x e^{c\sqrt{y}} \nu(dy)$, $x \geq 0$, so that the assumption is equivalent to $\rho(\infty) = \lim\limits_{x \to \infty} \rho(x) < \infty$. To obtain the desired estimate we adapt the arguments of the proof of \cite[Theorem 2.2b, Chapter II]{widder:1941} to our setting. From \cite[Theorem 6b, Chapter I]{widder:1941} we get that
\begin{equation*}
\overline{\nu}(x) = \int_x^\infty e^{-c\sqrt{y}} \rho(dy)
\end{equation*}
and integrating the right-hand side of the above by parts gives
\begin{equation*}
\overline{\nu}(x) = \lim_{y \to \infty}  e^{-c\sqrt{y}}\rho(y) -  e^{-c\sqrt{x}}\rho(x) + \int_x^\infty \frac{c}{2\sqrt{y}} e^{-c\sqrt{y}}\rho(y)dy = -e^{-c\sqrt{x}}\rho(x) + \int_x^\infty \frac{c}{2\sqrt{y}} e^{-c\sqrt{y}}\rho(y)dy,
\end{equation*}
where the second equality follows readily from $\rho(\infty) < \infty$. Consequently
\begin{align*}
\lim_{x \to \infty} e^{c\sqrt{x}} \overline{\nu}(x) &= -\rho(\infty) + \lim_{x \to \infty} e^{c\sqrt{x}} \int_x^\infty \frac{c}{2\sqrt{y}} e^{-c\sqrt{y}}\rho(y)dy \\
&= -\lim_{x \to \infty} e^{c\sqrt{x}} \int_x^\infty \frac{c}{2\sqrt{y}} e^{-c\sqrt{y}}(\rho(\infty)- \rho(y))dy \\
&= - \lim_{x \to \infty} \int_x^\infty \frac{c}{2\sqrt{y}} e^{-c(\sqrt{y}-\sqrt{x})}(\rho(\infty)- \rho(y))dy.
\end{align*}
To evaluate the limit on the right we make the change of variables $y \mapsto (y + \sqrt{x})^2$, which gives that
\begin{equation*}
\int_x^\infty \frac{c}{2\sqrt{y}} e^{-c(\sqrt{y}-\sqrt{x})}(\rho(\infty)- \rho(y))dy = \int_0^\infty ce^{-cy}( \rho(\infty)-\rho((y+\sqrt{x})^2)) dy.
\end{equation*}
As $\rho$ is positive and increases monotonically to $\rho(\infty)$ it follows that the integrand is uniformly (in $x$) dominated by the integrable function $y \mapsto c(\rho(\infty)-\nu(\{0\}))e^{-cy}$ so by the dominated convergence theorem we get that
\begin{equation*}
\lim_{x \to \infty} e^{c\sqrt{x}} \overline{\nu}(x)  = -\lim_{x \to \infty} \int_0^\infty ce^{-cy}( \rho(\infty)-\rho((y+\sqrt{x})^2)) dy = 0
\end{equation*}
which, in particular, implies that $\overline{\nu}(x) \stackrel{\infty}{=} \bigO(e^{-c\sqrt{x}})$. Conversely suppose that, for $x \geq A$, $\overline{\nu}(x) \leq K e^{-c\sqrt{x}}$ for some $K, c, A > 0$. Then, for $0 < c' < c$,
\begin{equation*}
\int_0^\infty e^{c'\sqrt{x}} \nu(dx) \leq e^{c'\sqrt{A}} + \int_A^\infty e^{c'\sqrt{x}}\nu(dx).
\end{equation*}
An integration by parts yields 
\begin{align*}
\int_A^\infty e^{c'\sqrt{x}}\nu(dx) &= e^{c'\sqrt{A}}\overline{\nu}(A) - \lim_{x\to \infty}e^{c'\sqrt{x}} \overline{\nu}(x) + \int_A^{\infty} \frac{c'}{2\sqrt{x}} e^{c'\sqrt{x}} \overline{\nu}(x) dx \\
&\leq  e^{c'\sqrt{A}}\overline{\nu}(A)+  K\int_A^\infty \frac{c'}{2\sqrt{x}}e^{(c'-c)\sqrt{x}} dx < \infty
\end{align*}
and thus Hardy's condition is satisfied for any $c' \in (0,c)$. The fact that $x \mapsto ce^{\frac{x}{2}} \in \apm$ is readily checked, which completes the proof of the first item.
Next, from \eqref{eq:big O Hardy example}, there exist $M,\underline{x} > 0$ such that, for $x \geq \underline{x}$,
\begin{equation*}
\nu(x) \geq Me^{-\frac{\alpha\sqrt{x}}{\log x}}.
\end{equation*}
Consequently, for $c > 0$,
\begin{equation*}
\int_0^\infty e^{c\sqrt{x}} \nu(x) dx \geq M\int_{\underline{x}}^\infty e^{\sqrt{x}\left(c-\frac{\alpha}{\log x}\right)} dx
\end{equation*}
and clearly the right-hand side is infinite for any $c > 0$, meaning that the Hardy's condition is not satisfied by $\nu$. Next, write $G_*(x) = \alpha x^{-1}e^{\frac{x}{2}}$ and note that, by \eqref{eq:big O Hardy example}, we have
\begin{equation*}
\nu(x) \stackrel{\infty}{\asymp} e^{-G_*(\log x)}.
\end{equation*}
To show moment determinacy of $\nu$, we will show that $G_*$ satisfies the assumptions of \Cref{thm:(in)determinacy-density}\ref{item-2:thm:(in)determinacy-density}. A straightforward computation gives
\begin{equation*}
s_{G_*}(x) = \frac{2e^{-\frac{x}{4}}}{\sqrt{\alpha f(x)}}, \text{ where } f(x) = \left(\frac{1}{x} - \frac{1}{x^2} + \frac{8}{x^3}\right),
\end{equation*}
which is plainly positive for $x > 0$. Since $\ap$ is a convex cone, $f(x) \stackrel{\infty}{\sim} x^{-1}$, and self-neglecting functions are closed under asymptotic equivalence, it suffices to show that the function
\begin{equation*}
s(x) = \sqrt{x}e^{-\frac{x}{4}}
\end{equation*}
is self-neglecting. However, this is immediate, as $\lim\limits_{x \to \infty} s(x) = 0$ and
\begin{equation*}
\frac{s(x+ws(x))}{s(x)} = e^{-ws(x)}\sqrt{1+\frac{ws(x)}{x}}.
\end{equation*}
Next, since $G_*'(x) = \alpha x^{-1}e^{\frac{x}{2}}\left(\frac{1}{2}-\frac{1}{x}\right)$ it follows that $\lim\limits_{x\to\infty} G_*'(x)e^{-\frac{x}{2}} = 0$. Finally, for any $x_0 \geq 0$,
\begin{equation*}
\int_{x_0}^\infty G_*(x)e^{-\frac{x}{2}}dx = \alpha \int_{x_0}^\infty \frac{1}{x}dx = \infty,
\end{equation*}
so that by \Cref{thm:(in)determinacy-density}\ref{item-2:thm:(in)determinacy-density} $\nu$ is moment determinate for all $\alpha > 0$.

\subsection{Proof of \Cref{thm:threshold-log-Levy}} \label{subsec:proof-thm:threshold-log-Levy}

\subsubsection{Proof of \Cref{thm:threshold-log-Levy}\ref{item-1:thm:threshold-log-Levy}}

First we shall prove the claim in the case $\overline{\Pi}(0)=\int_0^{\infty}\Pi(dr) = 0$ and $\sigma^2>0$. Since $Y = (Y_t)_{t \geq 0}$ admits all exponential moments its characteristic exponent $\Psi$ admits an analytical extension to the right-half plane, which we still denote by $\Psi$, and takes the form \eqref{eq:LK-representation} for $u \geq 0$. 
Let $t > 0$ be fixed. Differentiating $\Psi$ in \eqref{eq:LK-representation}, see e.g.~\cite[p.~347]{sato:2013}, one gets
\begin{equation} \label{eq:psipp}
\Psi'(u) = b + \sigma^2 u + \int_{-\infty}^0 \left(e^{ur} - \mathbb{I}_{\{|r|\leq 1\}}\right) r\Pi(dr) \quad \textrm{and} \quad \Psi''(u) = \sigma^2 + \int_{-\infty}^0 r^2e^{ur} \Pi(dr) >0,
\end{equation}
where the integrability conditions on $\Pi$  also ensure that $\Psi''$ is well-defined on $\R_+$. Next, invoking  the dominated convergence theorem, we have $\lim\limits_{u \to \infty} \int_{-\infty}^0 r^2e^{ur} \Pi(dr) = 0$ and hence $\frac{1}{\sqrt{\Psi''(u)}} \stackrel{\infty}{\sim} \frac{1}{\sigma}$. Since constants are trivially self-neglecting and self-neglecting functions are closed under asymptotic equivalence, it follows that $\Psi \in \ap$. Furthermore, since $\ap$ is a convex cone we get that $t\Psi \in \ap$, and thus the condition in \Cref{thm:(in)determinacy-moment-sequence}\ref{item-a:thm:(in)determinacy-moment-sequence} is fulfilled.
Let us now write $\nu_t(dx) = \mathbb P( e^{Y_t} \in dx),x>0$, and, for all $n\geq 1 $,
\begin{equation} \label{eq:def_eta}
\eta^2(n) = (\log \M_{\nu_t}(n))''= t\Psi''(n).
\end{equation} Then, for all $n\geq 1 $ and $y \in \R$, we have
\begin{eqnarray*}
\log \left|\frac{\M_{\nu_t}\left(n+i\tfrac{y}{\eta(n)}\right)}{\M_{\nu_t}(n)}\right| &=& t\Re\left(\Psi\left(n+i\tfrac{y}{\eta(n)}\right) - \Psi(n)\right) \\ &=&  -\frac{t\sigma^2}{2\eta^2(n)}y^2 + t\int_{-\infty}^0 e^{nr}\left(\cos\left(\tfrac{yr}{\eta(n)}\right)-1\right) \Pi(dr) \\ & \leq & -\frac{t\sigma^2}{2\eta^2(n)}y^2,
\end{eqnarray*}
where we simply use the trivial bound for the integral term. By combining \eqref{eq:def_eta} with \eqref{eq:psipp}, one easily gets that, for any $n \geq 1$, $\eta^2(n)\geq t\sigma^2$ and thus
\begin{equation*}
\left|\frac{\M_{\nu_t}\left(n+\tfrac{iy}{\eta(n)}\right)}{\M_{\nu_t}(n)}\right|  \leq e^{-\frac{y^2}{2}}
\end{equation*}
which shows that the condition of \Cref{thm:(in)determinacy-moment-sequence}\ref{item-b:thm:(in)determinacy-moment-sequence} is satisfied. From \eqref{eq:psipp}, one observes, since $\sigma^2>0$, that
\begin{equation}\label{eq:asympt_psip}
\Psi''(u) \stackrel{\infty}{\sim} \sigma^2\end{equation} and thus by integration, see \cite[Section 1.4]{Olver-74}, $\Psi'(u) \stackrel{\infty}{\sim} \sigma^2 u$. Therefore
\begin{equation*}
\lim\limits_{u \to \infty} u e^{-\frac{t\Psi'(u)}{2}} < \infty.
\end{equation*}
Finally, note that $\Psi(n)= \frac{\sigma^2}{2}n^2+\Psi_0(n)$ where $\Psi_0$ is the Laplace exponent of another spectrally negative L\'evy process (possibly the negative of a subordinator). By convexity of $\Psi_0$ and the fact that $\lim\limits_{n\to \infty} \Psi_0(n)=\infty$, we have $\Psi_0(n)>0$ for $n$ large enough and thus we obtain the following upper bound
\begin{equation*}
\sum^\infty \M_{\nu_t}^{-\frac{1}{2n}}(n) \leq \sum^\infty e^{-\frac{t\sigma^2}{4} n} < \infty.
\end{equation*}
By \Cref{thm:(in)determinacy-moment-sequence}\ref{item-2:thm:(in)determinacy-moment-sequence} it follows that $\nu_t$, the law of $ e^{Y_t}$, is moment indeterminate for all $t > 0$.
In the general case when $\overline{\Pi}(0) > 0$ we may separate the terms and write
\begin{align*}
\Psi(u) &= bu + \frac{1}{2}\sigma^2u^2 + \int_{-\infty}^0 (e^{ur}-1-ur\mathbb{I}_{\{|r| \leq 1\}})\Pi(dr) + \int_0^\infty (e^{ur}-1-ur\mathbb{I}_{\{|r| \leq 1\}})\Pi(dr) \\
&= \Psi_-(u) + \int_0^\infty (e^{ur}-1-ur\mathbb{I}_{\{|r| \leq 1\}})\Pi(dr) \\
&= \Psi_-(u) + \Psi_+(u),
\end{align*}
where $\Psi_-$ and $\Psi_+$  are the characteristic exponents of a L\'evy process and the  L\'evy measure associated to $\Psi_-$, say $\Pi_-$, satisfies $\overline{\Pi}_-(0) = 0$. Thus, for any $n \geq 0$,
\begin{equation*}
\M_{\nu_t}(n) = \int_0^\infty x^n \ \mathbb{P}(X_t \in dx) = \int_{-\infty}^\infty e^{ny} \ \mathbb{P}(Y_t \in dy) = e^{t\left(\Psi_-(n) + \Psi_+(n)\right)}
\end{equation*}
and from the earlier observations $(e^{t\Psi_-(n)})_{n \geq 0}$ is an indeterminate moment sequence. Since $e^{t\Psi_+(n)} > 0$ for all $n, t \geq 0$, Corollary \ref{cor:indeterminacy-moment-sequence-factorization} gives that the random variable $e^{Y_t}$ is moment indeterminate for all $t > 0$.

\subsubsection{Proof of \Cref{thm:threshold-log-Levy}\ref{item-2:thm:threshold-log-Levy}}
First, for any $n, t \geq 0$, writing again $\nu_t(dx) = \mathbb P(e^{Y_t}\in dx),x>0$, we have
\begin{equation*}
\M_{\nu_t}(n) = e^{tn\log(n+1)}
\end{equation*}
and hence
\begin{equation*}
\sum^\infty \M_{\nu_t}^{-\frac{1}{2n}}(n) = \sum^\infty (n+1)^{-\frac{t}{2}}.
\end{equation*}
The latter series diverges if and only if $t \leq 2$, which by Carleman's criterion yields the moment determinacy of $\nu_t$ for $t \leq 2$.
For the proof of indeterminacy we resort to an application of \Cref{thm:(in)determinacy-moment-sequence}. To this end, we first check that the function $\Psi(u) = u\log(u+1)$ is asymptotically parabolic on $\R^+$. Plainly, $\Psi$ is twice differentiable and  taking derivatives we have, for any $u\geq 0$,
\begin{equation}
\Psi''(u) = \frac{u+2}{(u+1)^2}> 0
\end{equation}
and thus $\Psi \in \mathrm{C}_+^2(\R_+)$. Clearly $\frac{1}{\sqrt{\Psi''(u)}} \stackrel{\infty}{\sim} \sqrt{u}$ and it is readily checked that $u \mapsto \sqrt{u}$ is self-neglecting. Since self-neglecting functions are closed under asymptotic equivalence it follows that $\Psi \in \ap$ and since $\ap$ is a convex cone we get that $ t\Psi \in \ap$, for any $t > 0$.
We proceed by verifying  that the condition in \Cref{thm:(in)determinacy-moment-sequence}\ref{item-b:thm:(in)determinacy-moment-sequence} is fulfilled for all $t > 0$. Write $\Log:\C \to \C$ for the holomorphic branch of  the complex logarithm such that $\Log(1) = 0$ and let $\eta$ be defined by
\begin{equation*}
\eta^2(n) = (\log \M_{\nu_t}(n))'' = t\frac{n+2}{(n+1)^2}.
\end{equation*}
Then, for all $n \in \N$ and $y\in \R$,
\begin{equation*}
\left|\frac{\M_{\nu_t}\left(n+\tfrac{iy}{\eta(n)}\right)}{\M_{\nu_t}(n)}\right| = e^{t\Re\left( \left(n+\tfrac{iy}{\eta(n)}\right)\Log\left(n+1+\tfrac{iy}{\eta(n)}\right) - n\log(n+1)\right)}.
\end{equation*}
Focusing on the term inside the exponential, we have
\begin{align*}
&\Re\left( \left(n+\tfrac{iy}{\eta(n)}\right)\Log\left(n+1+\tfrac{iy}{\eta(n)}\right)  - n\log(n+1) \right) = \\  &n\log\left(\frac{\sqrt{(n+1)^2 + \frac{y^2}{\eta^2(n)}}}{(n+1)}\right) - \frac{y}{\eta(n)}\arctan\left(\frac{y}{{\eta(n)}(n+1)}\right).
\end{align*}
Simplifying within the logarithm and substituting for the definition of $\eta$ then yields
\begin{align*}
&\Re\left( \left(n+\tfrac{iy}{\eta(n)}\right)\Log\left(n+1+\tfrac{iy}{\eta(n)}\right) - n\log(n+1)\right) = \\ &\frac{n}{2}\log\left(1+\frac{y^2}{t(n+2)}\right) - \frac{y(n+1)}{\sqrt{t(n+2)}}\arctan\left(\frac{y}{\sqrt{t(n+2)}}\right).
\end{align*}
Since $\log(1+x^{-1}) \stackrel{\infty}{=} x^{-1}+\littleo(x^{-1})$ (resp.~$\arctan(x^{-1}) \stackrel{\infty}{=} x^{-1}+\littleo(x^{-1})$) , we have that
\begin{equation*}
\lim_{n \to \infty} \frac{n}{2}\log\left(1+\frac{y^2}{t(n+2)}\right) = \frac{y^2}{2t} \quad \left( \textrm{resp. }  \lim_{n \to \infty} \frac{y(n+1)}{\sqrt{t(n+2)}}\arctan\left(\frac{y}{\sqrt{t(n+2)}}\right) = \frac{y^2}{t}\right).
\end{equation*}
It follows that there exists $n_0>0$ such that for all $n\geq n_0$,
\begin{equation*}
\left|\frac{\M_{\nu_t}\left(n+\tfrac{iy}{\eta(n)}\right)}{\M_{\nu_t}(n)}\right|  \leq e^{-Cy^2},
\end{equation*}
where $0< C <\frac{1}{2}$ is a constant depending only on $n_0$. Hence the integrability condition in \Cref{thm:(in)determinacy-moment-sequence}\ref{item-b:thm:(in)determinacy-moment-sequence} is satisfied for any $t > 0$.
The proof will be completed if we can show that the additional condition in \Cref{thm:(in)determinacy-moment-sequence}\ref{item-2:thm:(in)determinacy-moment-sequence} holds, namely that
\begin{equation*}
\lim\limits_{u \to \infty} ue^{-\frac{t\Psi'(u)}{2}} <\infty \quad \textrm{for} \quad t\geq2.
\end{equation*}
However, simple algebra yields that for $t \geq 2$
\begin{equation*}
\lim\limits_{u \to \infty} ue^{-\frac{t\Psi'(u)}{2}} = \lim\limits_{u \to \infty} ue^{-\frac{t}{2}\left(\frac{u}{u+1} + \log(u+1)\right)} =\lim\limits_{u \to \infty} u(u+1)^{-\frac{t}{2}}e^{-\frac{tu}{2(u+1)}} < \infty.
\end{equation*}

\subsection{Proof of \Cref{cor:asymptotic-spectrally-negative}} \label{subsec:proof-cor:asymptotic-spectrally-negative}
In this case, we write $\nu_t(x)dx = \mathbb P(e^{Y_t}\in dx),x>0$, see the comments before the statement. In the proof of \Cref{thm:threshold-log-Levy}\ref{item-1:thm:threshold-log-Levy} it was shown that, for any $t > 0$, $\M_{\nu_t}$ fulfills the assumptions of \Cref{prop:analogue-Tauberian-theorem} when $\sigma^2 > 0$ and $\overline{\Pi}(0) = 0$. Invoking this result, noting that $(t\Psi)_*(y) = t\Psi_*(\frac{y}{t})$, and changing variables, we get for $f_t(y) dy= \mathbb{P} (Y_t \in dy),y \in \R$, and any $t > 0$,
\begin{equation*}
f_t(y) \stackrel{\infty}{\sim} \frac{1}{\sqrt{2\pi t}} \sqrt{\Psi_{*}''\left(\frac{y}{t}\right)} e^{-t\Psi_*\left(\frac{y}{t}\right)}.
\end{equation*}
Next, we have, for $y > 0$,
\begin{eqnarray*}
\Psi_*(\Psi'(y)) &=& y\Psi'(y) - \Psi(y) \\ &=& \frac{1}{2}\sigma^2y^2 + y\int_{-\infty}^0 \left(e^{yr}-\mathbb{I}_{\{|r| \leq 1\}}\right)r\Pi(dr) - \int_{-\infty}^0 \left(e^{yr}-1-yr\mathbb{I}_{\{|r| \leq 1\}}\right)\Pi(dr)  \\ &=& \frac{\sigma^2y^2}{2} + H(y),
\end{eqnarray*}
where the first equality follows from \eqref{eq:derivative-Legendre}, the second follows from \eqref{eq:psipp} and some straightforward algebra, and the third equality serves as a definition for the function $H$.  Observe that an integration by parts yields
\begin{eqnarray*}
H(y) &=&  \int_{-\infty}^0 \left(1-e^{yr}(1-yr)\right)\Pi(dr) \\
&=& - y^2\int_{-\infty}^0 e^{yr}r\Pi(-\infty,r)dr+ \left(1-e^{yr}(1-yr)\right)\Pi(-\infty,r)\Big|_{-\infty}^0\\
&=&-y^2\int_{-\infty}^0 e^{yr}r\Pi(-\infty,r)dr,
\end{eqnarray*}
where we used that $\lim\limits_{r\to-\infty}\Pi(-\infty,r)=0$ and $\lim\limits_{r\to 0}r^2\Pi(-\infty,r)=0$.
Finally, since
\begin{equation*}
\Psi_*''(\Psi'(y)) = \frac{1}{\Psi''(y)} \quad \text{and} \quad \Psi''(y) \stackrel{\infty}{\sim} \sigma^2,
\end{equation*}
we conclude that
\begin{equation*}
f_t(t\Psi'(y)) \stackrel{\infty}{\sim} \frac{1}{\sqrt{2\pi t}} \sqrt{\Psi_{*}''\left(\Psi'(y)\right)} e^{-t\Psi_*\left(\Psi'(y))\right)} \stackrel{\infty}{\sim}  \frac{1}{\sqrt{2\pi \sigma^2 t}} e^{-\frac{1}{2}t\sigma^2y^2+ty^2\int_{-\infty}^0 e^{yr}r\Pi(-\infty,r)dr}.
\end{equation*}
\bibliographystyle{abbrv}


\begin{thebibliography}{10}

\bibitem{akhiezer:1965}
N.~I. Akhiezer.
\newblock {\em The classical moment problem and some related questions in
  analysis}.
\newblock Translated by N. Kemmer. Hafner Publishing Co., New York, 1965.

\bibitem{balkema:1993}
A.~A. Balkema, C.~Kl{\"u}ppelberg, and S.~I. Resnick.
\newblock Densities with {G}aussian tails.
\newblock {\em Proc. London Math. Soc. (3)}, 66(3):568--588, 1993.

\bibitem{balkema:1995}
A.~A. Balkema, C.~Kl{\"u}ppelberg, and U.~Stadtm{\"u}ller.
\newblock Tauberian results for densities with {G}aussian tails.
\newblock {\em J. London Math. Soc. (2)}, 51(2):383--400, 1995.

\bibitem{berg:2004a}
C.~Berg.
\newblock Correction to a paper by {A}. {G}. {P}akes: ``{R}emarks on converse
  {C}arleman and {K}re\u\i n criteria for the classical moment problem'' [{J}.
  {A}ust. {M}ath. {S}oc. {\textbf{71}} (2001), no. 1, 81--104; {MR}1840495].
\newblock {\em J. Aust. Math. Soc.}, 76(1):67--73, 2004.

\bibitem{berg:2005}
C.~Berg.
\newblock On powers of {S}tieltjes moment sequences. {I}.
\newblock {\em J. Theoret. Probab.}, 18(4):871--889, 2005.

\bibitem{berg:2004}
C.~Berg and A.~J. Dur{\'a}n.
\newblock A transformation from {H}ausdorff to {S}tieltjes moment sequences.
\newblock {\em Ark. Mat.}, 42(2):239--257, 2004.

\bibitem{bingham:1989}
N.~H. Bingham, C.~M. Goldie, and J.~L. Teugels.
\newblock {\em Regular variation}, volume~27 of {\em Encyclopedia of
  Mathematics and its Applications}.
\newblock Cambridge University Press, Cambridge, 1989.

\bibitem{feigin:1983}
P.~D. Feigin and E.~Yashchin.
\newblock On a strong {T}auberian result.
\newblock {\em Z. Wahrsch. Verw. Gebiete}, 65(1):35--48, 1983.

\bibitem{hardy:1917}
G.~H. Hardy.
\newblock On {S}tieltjes' ``probl\`{e}me des moments".
\newblock {\em Messenger of Math}, 46:175--182, 1917.

\bibitem{hardy:1918}
G.~H. Hardy.
\newblock On {S}tieltjes' ``probl\`{e}me des moments" (continued).
\newblock {\em Messenger of Math}, 47:81--88, 1918.

\bibitem{heyde:1963}
C.~C. Heyde.
\newblock Some remarks on the moment problem. {I}.
\newblock {\em Quart. J. Math. Oxford Ser. (2)}, 14:91--96, 1963.

\bibitem{lin:1997}
G.~D. Lin.
\newblock On the moment problems.
\newblock {\em Statist. Probab. Lett.}, 35(1):85--90, 1997.

\bibitem{lin:2017a}
G.~D. Lin.
\newblock Recent developments on the moment problem.
\newblock {\em J. Stat. Distr. App.}, 4(1):5,
  Jul 2017.

\bibitem{Olver-74}
F.~Olver.
\newblock {\em Introduction to Asymptotics and Special Functions}.
\newblock Academic Press, 1974.

\bibitem{pakes:2001}
A.~G. Pakes.
\newblock Remarks on converse {C}arleman and {K}re\u\i n criteria for the
  classical moment problem.
\newblock {\em J. Aust. Math. Soc.}, 71(1):81--104, 2001.

\bibitem{patie:2018}
P.~Patie and A.~Vaidyanathan.
\newblock The log-{L}\'{e}vy moment problem via {B}erg-{U}rbanik semigroups.
\newblock {\em Studia Math.}, to appear, 2019.

\bibitem{pedersen:1998}
H.~L. Pedersen.
\newblock On {K}rein's theorem for indeterminacy of the classical moment
  problem.
\newblock {\em J. Approx. Theory}, 95(1):90--100, 1998.

\bibitem{sato:2013}
K.-i. Sato.
\newblock {\em L{\'e}vy processes and infinitely divisible distributions},
  volume~68 of {\em Cambridge Studies in Advanced Mathematics}.
\newblock Cambridge University Press, Cambridge, 2013.
\newblock Translated from the 1990 Japanese original, Revised edition of the
  1999 English translation.

\bibitem{schmudgen:2017}
K.~Schm\"{u}dgen.
\newblock {\em The moment problem}, volume 277 of {\em Graduate Texts in
  Mathematics}.
\newblock Springer, Cham, 2017.

\bibitem{shohat:1943}
J.~A. Shohat and J.~D. Tamarkin.
\newblock {\em The {P}roblem of {M}oments}.
\newblock Mathematical Surveys and Monographs, vol. I. American
  Mathematical Society, New York, 1943.

\bibitem{simon:1998}
B.~Simon.
\newblock The classical moment problem as a self-adjoint finite difference
  operator.
\newblock {\em Adv. Math.}, 137(1):82--203, 1998.

\bibitem{stoyanov:2013}
J.~Stoyanov and G.~D. Lin.
\newblock Hardy's condition in the moment problem for probability
  distributions.
\newblock {\em Theory Probab. Appl.}, 57(4):699--708, 2013.

\bibitem{widder:1941}
D.~V. Widder.
\newblock {\em The Laplace transform}, volume~6 of {\em Princeton Mathematical
  Series}.
\newblock Princeton University Press, 1941.

\end{thebibliography}

\end{document}